\theoremstyle{plain}
\newtheorem{theorem}{Theorem}[section]
\newtheorem{lemma}[theorem]{Lemma}
\newtheorem{proposition}[theorem]{Proposition}
\newtheorem{corollary}[theorem]{Corollary}
\theoremstyle{definition}
\newtheorem{definition}[theorem]{Definition}
\newtheorem{example}[theorem]{Example}
\theoremstyle{remark}
\newtheorem{remark}[theorem]{Remark}
\newcommand{\ssyt}{semi--standard Young tableau}
\author[L. Colmenarejo]{Laura Colmenarejo} 
\thanks{Partially supported by MTM2013-40455-P, P12-FQM-2696, FQM-333, and FEDER. }
\title[Combinatorics on Kronecker coefficients]{\textsc{Combinatorics on several families of \\
Kronecker coefficients \\
related to plane partitions}}
\email{laurach@us.es}
\address{Department of Algebra, University of Seville}
\begin{document}

\begin{abstract}
{\it We present a study of three families of Kronecker coefficients, which we describe in terms of reduced Kronecker coefficients. This study is grounded on the generating function of the coefficients, proved by a bijection between two combinatorial objects. This study includes the connection between plane partitions and these three families of reduced Kronecker coefficients, providing us their combinatorial interpretation. As an application, we verify that the saturation hypothesis holds for our three families of reduced Kronecker coefficients. The study also includes other interpretation in terms of the quasipolynomials that govern these families. We specify the degree and the period of these quasipolynomials. Finally, the direct relation between Kronecker coefficients and reduced Kronecker coefficients allows us to give some observations about the rate of growth of the Kronecker coefficients associated to the reduced Kronecker coefficients of the study. }

\end{abstract}

\maketitle

\section*{Introduction}

The Kronecker product of two Schur functions $s_\mu$ and $s_\nu$, denoted by $s_\mu \ast s_\nu$, is the Frobenius characteristic of the tensor product of the irreducible representations of the symmetric group corresponding to the partitions $\mu$ and $\nu$. The coefficient of $s_\lambda$ in this product, $g_{\mu\nu}^\lambda$, is called \emph{Kronecker coefficient}, and corresponds to the multiplicity of the irreducible character $\chi^\lambda$ in $\chi^\mu \chi^\nu$. 

Although the problem has been studied for almost a century, a formula for decomposing the Kronecker product is unavailable. Some partial results can be found in  \cite{IniLiu,MR2264933,Blasiak12,zbMATH06062487,MR2721516,MR582085,MR977863,MR1315363,MR1798227}. Recently, the problem of understanding the Kronecker coefficients has come back to the forefront also because of their connections to Geometric Complexity Theory, \cite{zbMATH06062487,MR2721467,PakPanovaComple14,MR2570451}, and to Quantum Information Theory, \cite{MR2197548,Klya04,Wigner}.

 In 1938, Murnaghan discovered an intriguing stabilization phenomena for the Kronecker coefficients,  \cite{MR1507347, MR0075213}. They stabilize when we increase the first rows of its three indexing partitions. The limits of these sequences are known as the \emph{reduced Kronecker coefficients}.
 They are indexed by the partitions obtained after deleting the first parts of the original triple.  

In the present work, we study three families of Kronecker coefficients (including $\overline{g}_{(k^a),(k^b)}^{(k)}$ that was presented in \cite{CR2015}). These families of Kronecker coefficients can be described in terms of reduced Kronecker coefficients. These families exhibit stabilization phenomena that we show later. We study the following families of reduced Kronecker coefficients:
\begin{itemize}
\item[$\triangleright$] The {\bf Family 1} is formed by the reduced Kronecker coefficients of the form $\overline{g}_{(k^a),(k^b)}^{(k)}$. This family includes all families of Kronecker coefficients of the form $g_{(N-k\cdot a, k^a),(N-k\cdot b, k^b)}^{(N-k,k)}$, for $N$ large enough.  \vspace{0.3cm}
\item[$\triangleright$] The {\bf Family 2} is formed by the reduced Kronecker coefficients of the form $\overline{g}_{((k+i)^a), (k^b)}^{(k)}$. This family includes all families of Kronecker coefficients of the form $g_{(N-(k+i)\cdot a, (k+i)^a),(N-k\cdot b, k^b)}^{(N-k,k)}$, for $N$ large enough. \vspace{0.3cm}
\item[$\triangleright$] The {\bf Family 3} is formed by the reduced Kronecker coefficients of the form $\overline{g}_{(k^b), (k+i,k^{a})}^{(k)}$. This family includes all families of Kronecker coefficients of the form $g_{(N-k\cdot b, k^b),(N-(k+1)\cdot a-i, k+i, k^a)}^{(N-k,k)}$, for $N$ large enough.
\end{itemize} 

A general bound for $N$ in given in Corollary \ref{StabilityBound} and specific ones for each family are given in the proofs of Theorem \ref{ThmGF} and Theorem \ref{ThmFam3}.

  After a briefly introduction about the reduced Kronecker coefficients and their relation with the Kronecker coefficients, Section 2 focuses on our families of reduced Kronecker coefficients. Our approach uses as main combinatorial tool the \emph{Kronecker tableaux}, Definition \ref{KT}, of C. Ballantine and R. Orellana. We find this combinatorial approach through special kind of tableaux in a paper of C. Ballantine and B. Hallahan, \cite{BallHalla12}, where they use \emph{Yamanouchi coloured tableaux} and Blasiak's combinatorial rule, \cite{Blasiak12}, to study the stability of the Kronecker product of a Schur function indexed by a hook partition and another Schur function indexed by a rectangle partition. In our case, we use the description of a special type of Kronecker coefficients in terms of Kronecker tableaux presented in Theorem \ref{ThmKT} to compute the reduced Kronecker coefficients that we consider. 

  The generating function related to Families 1, 2 and 3 are shown in Theorem \ref{ThmGF} and Theorem \ref{ThmFam3}. For Family 1, the generating function depends on $a$ and $b$. In fact, once we fix $k$, the reduced Kronecker coefficients for $b=a$ in Family 1 are stable for $a$ large enough. In case of Family 2 for $b=a$, once we fix $i$, there are some initial zeros and the rest of the sequence has as generating function the same generating function than Family 1 for $b=a$. The number of initial zeros depends on $i$ and $a$. Then, results related to Family 1 apply also for Family 2, once we shift these initial zeros. In Table \ref{table3} it is presented the stability phenomenon of Family 3: the reduced Kronecker coefficients stabilize in the diagonals. We give the generating function of the stable values of the diagonals.  

In Section \ref{PP} we present a striking connection between our families of reduced Kronecker coefficients and plane partitions. This unexpected relation is showed in Theorem \ref{ThmPP}, which is proved comparing the generating functions obtained in Theorem \ref{ThmGF} and in Theorem \ref{ThmFam3} with a reformulation of MacMahon's classical formula for the generating function of plane partitions, Corollary \ref{ThmPPfg}. Plane partitions have appeared before in the study of the Kronecker coefficients in works of E. Vallejo, \cite{MR1747064}, and L. Manivel, \cite{MR2550164}. As an application, we use the combinatorial interpretations in terms of plane partitions to prove that the saturation hypothesis of Kirillov and Klyachko holds for these three families of reduced Kronecker coefficients, and  that the resulting sequences  are weakly increasing in case of $b=a$ in Family 1 and $b=a+1$ in Family 3. Note that the saturation hypothesis do not hold for Kronecker coefficients in general, \cite{MR2570451}.

In Section 4, we present other interpretation of our reduced Kronecker coefficients in terms of the quasipolynomials, Theorem \ref{ThmQuasiPoly}. We specify the degree and the period of these quasipolynomials.

Finally, moving back to the setting of Kronecker coefficients, in Section 5, we translate our main results in terms of Kronecker coefficients and we present some implications towards the study of the rate of growth of the Kronecker coefficients of this study. 

The results concerning Family 1 are joint work with M. Rosas, and were announced in \cite{CR2015}. Complete details of the proofs can be found in the thesis of L. Colmenarejo, \cite{Col16}.


\section{Reduced Kronecker coefficients}\label{RKC}

The reduced Kronecker coefficients, $\overline{g}_{\alpha \beta}^\gamma$, were introduced by Murnaghan through the Kronecker product.
\begin{theorem}[Murnaghan's Theorem, \cite{MR1507347} \cite{MR0075213}]\label{MurnThm}
There exists a family of non--negative integers $\{ \overline{g}_{\alpha \beta}^\gamma\}$, indexed by triples of partitions $(\alpha, \beta, \gamma)$, such that, for $\alpha$ and $\beta$ fixed, only finitely many terms 
$\overline{g}_{\alpha \beta}^\gamma$ are non--zero, and for all $n\geq 0$,
\begin{eqnarray*}
 s_{\alpha[n]} \ast s_{\beta[n]} = \sum_{\gamma} \overline{g}_{\alpha \beta}^\gamma s_{\gamma[n]},
\end{eqnarray*}
where $\alpha[n]= \left(n-|\alpha|, \alpha_1,\alpha_2,\dots\right)$. 
\end{theorem}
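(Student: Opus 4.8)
The plan is to work entirely inside the ring of symmetric functions, using that the Kronecker coefficient equals a multiplicity of characters, $g_{\mu\nu}^\lambda = \langle s_\mu \ast s_\nu, s_\lambda\rangle = \langle \chi^\mu\chi^\nu, \chi^\lambda\rangle_{\S_n}$, i.e. the multiplicity of $\chi^\lambda$ in the tensor product $\chi^\mu\otimes\chi^\nu$. Fixing a triple $(\alpha,\beta,\gamma)$, the theorem reduces to three assertions about the integer sequence $g_n := g_{\alpha[n],\beta[n]}^{\gamma[n]}$: that it is eventually constant (so that the limit $\overline{g}_{\alpha\beta}^\gamma$ exists), that this limit is a non-negative integer, and that for fixed $\alpha,\beta$ only finitely many $\gamma$ yield a nonzero limit. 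Once stabilization is known, non-negativity and integrality are automatic, since each $g_n\in\N$; so the real content is stabilization together with finiteness.

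For stabilization I would show that $(g_n)$ is (i) bounded above by a constant depending only on $(\alpha,\beta,\gamma)$, and (ii) eventually non-decreasing, after which a bounded, eventually weakly increasing sequence of integers must be eventually constant. The uniform bound in (i) I would extract from a combinatorial or character-theoretic expression for $g_n$ of Littlewood--Richardson type: once the first parts $n-|\alpha|$, $n-|\beta|$, $n-|\gamma|$ are long relative to $\alpha_1,\beta_1,\gamma_1$, the objects being counted involve only the fixed tails $\alpha,\beta,\gamma$ plus a uniformly bounded amount of data in the long first rows, which caps $g_n$ by a quantity independent of $n$.

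The monotonicity (ii) is the delicate point. A purely representation-theoretic attempt via branching is inconclusive: restricting $\chi^{\alpha[n+1]}\otimes\chi^{\beta[n+1]}$ from $\S_{n+1}$ to $\S_n$ and pairing with $\chi^{\gamma[n]}$ gives, via Frobenius reciprocity and the branching rule, a common auxiliary quantity that is bounded below by both $g_n$ and $g_{n+1}$, rather than a direct comparison of the two. I would therefore argue combinatorially, constructing an explicit injection from the set of fillings computing $g_n$ into the set computing $g_{n+1}$ --- informally, lengthening each first row by one box and extending the filling in the forced way --- and then proving it becomes a bijection once $n$ exceeds an explicit threshold $n_0(\alpha,\beta,\gamma)$. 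Checking that this map is well defined, injective, and eventually surjective is where the real work concentrates and is the main obstacle; it is also what makes the stability threshold effective, as in Corollary \ref{StabilityBound}.

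Finally, for finiteness I would invoke a support bound for nonvanishing Kronecker coefficients, namely that $g_{\mu\nu}^\lambda\neq 0$ forces $\ell(\lambda)\leq \ell(\mu)\,\ell(\nu)$ together with a corresponding bound on $\lambda_1$ in terms of the parts of $\mu$ and $\nu$. Substituting $\mu=\alpha[n]$, $\nu=\beta[n]$, $\lambda=\gamma[n]$ and passing to the limit bounds both $\ell(\gamma)$ and $|\gamma|$ solely in terms of $\ell(\alpha),\ell(\beta),|\alpha|,|\beta|$, so only finitely many $\gamma$ can have $\overline{g}_{\alpha\beta}^\gamma\neq 0$. The displayed expansion then records precisely these stabilized multiplicities.
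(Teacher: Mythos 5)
The paper itself offers no proof of this statement: it is quoted as Murnaghan's classical theorem with references to \cite{MR1507347} and \cite{MR0075213}, so your proposal can only be judged on its own terms. As a strategy it is the standard modern route (uniform boundedness of $g_n = g_{\alpha[n]\beta[n]}^{\gamma[n]}$, eventual monotonicity, and a support bound for finiteness), but it does not amount to a proof, for two reasons. First, the entire content of the theorem is the stabilization of $(g_n)$, and your argument for it rests on an injection of fillings that ``becomes a bijection once $n$ exceeds an explicit threshold'' --- which you yourself flag as the main obstacle and never construct. Neither the well-definedness of ``lengthening each first row by one box'' nor the eventual surjectivity is given, and there is no elementary combinatorial rule computing a general Kronecker coefficient on which such fillings could be based; known proofs of monotonicity (e.g.\ Brion's, or the Schur-function manipulations of Briand--Orellana--Rosas in \cite{MR2774644}) require genuinely different inputs. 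So the heart of the theorem is asserted, not proved.

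Second, the statement you are asked to prove is stronger than ``the sequence $g_n$ stabilizes'': it asserts the identity $s_{\alpha[n]} \ast s_{\beta[n]} = \sum_\gamma \overline{g}_{\alpha\beta}^\gamma\, s_{\gamma[n]}$ for \emph{all} $n\geq 0$, including the range where $\alpha[n]$, $\beta[n]$ or $\gamma[n]$ fail to be partitions (the paper notes $\alpha[n]$ is a partition only for $n\geq \alpha_1+|\alpha|$) and $s_{\gamma[n]}$ must be interpreted by straightening, producing signs and cancellations. Your reduction handles only the large-$n$ regime and says nothing about why the same coefficients $\overline{g}_{\alpha\beta}^\gamma$ make the identity valid for small $n$. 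A smaller point: in the finiteness step you need a bound on $|\gamma| = n - \gamma[n]_1$, i.e.\ a \emph{lower} bound on the first part of $\lambda$ (equivalently, subadditivity of the depth $n-\lambda_1$, giving $|\gamma|\leq|\alpha|+|\beta|$); an upper bound on $\lambda_1$, as written, is the wrong direction, although combined with Dvir's length bound the correct statement does finish that step.
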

Note that $\alpha[n]$ is a partition if and only if $n\geq \alpha_1 + |\alpha|$. 

Murnaghan's theorem shows the following stability property for the Kronecker coefficients: for $n$ big enough the expansion of $s_{\alpha[n]} \ast s_{\beta[n]}$ in the Schur basis does not depend on the first part of the indexing partitions. 

In particular, given three partitions $\alpha$, $\beta$ and $\gamma$, the sequence $\left\{ g_{\alpha[n] \beta[n]}^{\gamma[n]}\right\}_n$ is eventually constant. The reduced Kronecker coefficient $\overline{g}_{\alpha \beta}^\gamma$ can be defined as the stable value of this sequence. 
Therefore, there exists a positive integer $N$ such that, for $n\geq N$,
\begin{eqnarray*}
 \overline{g}_{\alpha \beta}^\gamma = g_{\alpha[n] \beta[n]}^{\gamma[n]}.
\end{eqnarray*}

The point at which the expansion of the Kronecker product $s_{\alpha[n]} \ast s_{\beta[n]}$ stabilizes is denoted by $stab(\alpha, \beta)$. In \cite{MR2774644}, E. Briand, R. Orellana, and M. Rosas prove that $stab(\alpha,\beta)=|\alpha| + |\beta| + \alpha_1 + \beta_1$. Since the reduced Kronecker coefficients inherit the symmetry from the Kronecker coefficients, we have the following bound for $N$.
\begin{corollary}\label{StabilityBound}
 Consider three partitions $\alpha$, $\beta$ and $\gamma$ such that, for $n \geq N$
 \begin{eqnarray*}
 \overline{g}_{\alpha \beta}^\gamma = g_{\alpha[n] \beta[n]}^{\gamma[n]}.
\end{eqnarray*}
Then, $N \leq \min \{ stab(\alpha,\beta), stab(\alpha, \gamma), stab(\beta, \gamma) \}$. 
\end{corollary}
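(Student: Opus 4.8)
The plan is to combine two ingredients already available: the full permutation symmetry of the Kronecker coefficients in their three indexing partitions, and the characterization of $stab(\alpha,\beta)$ as the exact threshold beyond which the expansion of $s_{\alpha[n]}\ast s_{\beta[n]}$ coincides with the reduced expansion. The bound $N\leq stab(\alpha,\beta)$ will be immediate from the second ingredient, and the remaining two bounds will be obtained by transporting that argument across the symmetry.

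First I would observe that $\alpha[n]$, $\beta[n]$ and $\gamma[n]$ are all partitions of the same integer $n$, so that $g_{\alpha[n]\beta[n]}^{\gamma[n]}=\langle \chi^{\alpha[n]}\chi^{\beta[n]},\chi^{\gamma[n]}\rangle$ is defined and, the characters of $\S_n$ being real, is invariant under every permutation of the three partitions. This is exactly the symmetry the reduced coefficients inherit, giving $\overline{g}_{\alpha\beta}^\gamma=\overline{g}_{\alpha\gamma}^\beta=\overline{g}_{\beta\gamma}^\alpha$ together with the corresponding identities at finite level, $g_{\alpha[n]\beta[n]}^{\gamma[n]}=g_{\alpha[n]\gamma[n]}^{\beta[n]}=g_{\beta[n]\gamma[n]}^{\alpha[n]}$.

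Then, by the very definition of the stabilization point, for $n\geq stab(\alpha,\beta)$ the identity $s_{\alpha[n]}\ast s_{\beta[n]}=\sum_\gamma \overline{g}_{\alpha\beta}^\gamma s_{\gamma[n]}$ holds, so reading off the coefficient of $s_{\gamma[n]}$ gives $g_{\alpha[n]\beta[n]}^{\gamma[n]}=\overline{g}_{\alpha\beta}^\gamma$ and hence $N\leq stab(\alpha,\beta)$. Applying the same statement to the pair $(\alpha,\gamma)$ yields $g_{\alpha[n]\gamma[n]}^{\beta[n]}=\overline{g}_{\alpha\gamma}^\beta$ for $n\geq stab(\alpha,\gamma)$; rewriting the left-hand side by the symmetry of $g$ and the right-hand side by the symmetry of $\overline{g}$ turns this into $g_{\alpha[n]\beta[n]}^{\gamma[n]}=\overline{g}_{\alpha\beta}^\gamma$, so that $N\leq stab(\alpha,\gamma)$. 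The pair $(\beta,\gamma)$ is treated identically and gives $N\leq stab(\beta,\gamma)$; taking the minimum of the three bounds concludes.

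Since this is a corollary, I do not expect any deep step: the result is essentially a repackaging of the Briand--Orellana--Rosas formula through symmetry. The only point requiring care is the bookkeeping --- checking that the three partitions have the same size so that the permutation symmetry of $g$ applies, and applying the symmetry of $\overline{g}$ in the correct direction so that all three inequalities bound the same threshold $N$ rather than three different stabilization thresholds.
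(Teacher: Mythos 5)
Your argument is correct and is precisely the one the paper intends: the corollary is stated there with only the remark that the reduced Kronecker coefficients inherit the $\S_3$-symmetry of the Kronecker coefficients, and your proof fleshes out exactly that reasoning, applying the Briand--Orellana--Rosas threshold $stab(\cdot,\cdot)$ to each of the three pairs and transporting the conclusion via the symmetry of $g$ and $\overline{g}$. Nothing further is needed.
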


Remark that Kronecker coefficients and reduced Kronecker coefficients are directly related. The reduced Kronecker coefficients are interesting objects of their own right. For instance, Littlewood observed that they coincide with the Littlewood--Richardson coefficients when $|\alpha|+|\beta| = |\gamma|$, \cite{MR0070640, MR0095209}. Furthermore, they contain enough information to compute from them the Kronecker coefficients, \cite{MR2774644}.


\section{The generating function of several families \\ of reduced Kronecker coefficient}\label{GF}

Before computing the generating functions of these families, we need to introduce the combinatorial tools that we use in our proofs. 
\begin{definition}
An \emph{$\alpha$--lattice permutation} is a sequence of integers such that in every initial part of the sequence the number of occurrences of $i$ plus $\alpha_i$ is bigger or equal than the number of occurrences of $i+1$ plus $\alpha_{i+1}$.
\end{definition}

\begin{definition}\label{KT}
A \emph{Kronecker tableau} is a semi--standard Young tableau  $T$ of shape $\lambda/ \alpha$ and type $\nu/\alpha$, with $\alpha \subseteq \lambda \cap \nu$, whose reverse reading word is an $\alpha$--lattice permutation, and such that if either $\alpha_1=\alpha_2$, or $\alpha_1>\alpha_2$ and any one of the following two conditions is satisfied:
\begin{itemize}
\item[(i)]  The number of $1$'s in the second row of $\lambda/\alpha$ is exactly $\alpha_1-\alpha_2$.
\item[(ii)]  The number of $2$'s in the first row of $\lambda/\alpha$ is exactly $\alpha_1-\alpha_2$.
\end{itemize}

We denote by $k_{\alpha \nu}^\alpha$ the number of Kronecker tableaux of shape $\lambda/\alpha$ and type $\nu/\alpha$, with $\alpha \subseteq \lambda \cap \nu$.
\end{definition}

For instance, consider $\lambda = (5,3,2,1)$, $\nu=(5,4,2)$ and $\alpha=(3,1)$. Then, on the left, there is an example of a tableau that is not a Kronecker tableau, because it does not satisfy the $\alpha$--condition, and on the right, there is an example of a Kronecker tableau:
\begin{center}
\scalebox{0.9}{
\begin{tikzpicture}
\draw (0,0) rectangle (0.5,0.5);
\node at (0.25,0.25) {3};
\draw (0,0.5) rectangle (0.5,1);
\node at (0.25,0.75) {1};
\draw[fill=blue!40] (0,1) rectangle (0.5,1.5);
\draw[fill=blue!40] (0,1.5) rectangle (0.5,2);
\draw (0.5,0.5) rectangle (1,1);
\node at (0.75,0.75) {3};
\draw (0.5,1) rectangle (1,1.5);
\node at (0.75,1.25) {2};
\draw[fill=blue!40] (0.5,1.5) rectangle (1,2);
\draw (1,1) rectangle (1.5,1.5);
\node at (1.25, 1.25) {2};
\draw[fill=blue!40] (1,1.5) rectangle (1.5,2);
\draw (1.5,1.5) rectangle (2,2);
\node at (1.75, 1.75) {1};
\draw (2,1.5) rectangle (2.5, 2);
\node at (2.25, 1.75) {2};
\node at (1,-0.25) {No Kronecker tableau};

\draw (5,0) rectangle (5.5,0.5);
\node at (5.25,0.25) {3};
\draw (5,0.5) rectangle (5.5,1);
\node at (5.25,0.75) {1};
\draw[fill=blue!40] (5,1) rectangle (5.5,1.5);
\draw[fill=blue!40] (5,1.5) rectangle (5.5,2);
\draw (5.5,0.5) rectangle (6,1);
\node at (5.75,0.75) {2};
\draw (5.5,1) rectangle (6,1.5);
\node at (5.75,1.25) {1};
\draw[fill=blue!40] (5.5,1.5) rectangle (6,2);
\draw (6,1) rectangle (6.5,1.5);
\node at (6.25, 1.25) {3};
\draw[fill=blue!40] (6,1.5) rectangle (6.5,2);
\draw (6.5,1.5) rectangle (7,2);
\node at (6.75, 1.75) {2};
\draw (7,1.5) rectangle (7.5, 2);
\node at (7.25, 1.75) {2};
\node at (6.25,-0.25) {Kronecker tableau};
\end{tikzpicture}}
\end{center}
In both cases, the reverse reading word is a $(3,1)$--lattice permutation.

C. Ballantine and R. Orellana introduce the notion of the Kronecker tableaux to give a combinatorial description of a special kind of Kronecker coefficients. 
\begin{theorem}[C. Ballantine, R. Orellana, \cite{MR2264933}]\label{ThmKT}
Let $n$ and $p$ be positive integers such that $n \geq 2p$. Let $\lambda=(\lambda_1, \dots, \lambda_{\ell(\lambda)})$ and $\nu$ be partitions of $n$.
\begin{itemize}
\item[(a)] If $\lambda_1 \geq 2p-1$,   the multiplicity of $s_{\nu}$ in $
s_{(n-p,p)}*s_{\lambda}$ equals ${\displaystyle \sum_{\stackrel{\alpha \vdash p}{\alpha \subseteq
\lambda\cap \nu}}k_{\alpha\nu}^{\lambda}.}$ 
\item[(b)] If $\ell(\lambda) \geq 2p-1$, the multiplicity of
$s_{\nu}$ in $ s_{(n-p,p)}*s_{\lambda}$ equals ${\displaystyle \sum_{\stackrel{\alpha \vdash
p}{\alpha \subseteq \lambda^\prime \cap \nu^\prime}}k_{\alpha\nu^\prime}^{\lambda^\prime}}$, where $\lambda^\prime$ denotes the conjugate partition of $\lambda$.
\end{itemize}
\end{theorem}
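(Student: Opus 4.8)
The plan is to reduce the multiplicity we want, namely the Kronecker coefficient $g_{(n-p,p)\lambda}^{\nu}$, to a signed sum of \lrcoeffs{}, and then to realize that signed sum as the nonnegative count of Kronecker tableaux by means of a sign--reversing involution. First I would expand the two--row Schur function by Jacobi--Trudi, $s_{(n-p,p)} = h_{n-p}h_p - h_{n-p+1}h_{p-1}$, where $h_k=s_{(k)}$. The point of writing $s_{(n-p,p)}$ as a polynomial in complete homogeneous functions is that $h_k$ is the Frobenius characteristic of the trivial representation of $\S_k$, so $h_k * F = F$ for every symmetric function $F$ of degree $k$; this lets the Kronecker product be evaluated term by term.

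Next I would invoke the standard compatibility between the Kronecker product and the comultiplication of symmetric functions, $(f\cdot g)*h = \sum_{(h)} (f*h_{(1)})\,(g*h_{(2)})$ with $\Delta h = \sum_{(h)} h_{(1)}\otimes h_{(2)}$. Applying this with $h=s_\lambda$, whose coproduct is $\Delta s_\lambda = \sum_{\mu\subseteq\lambda} s_\mu \otimes s_{\lambda/\mu}$, and retaining only the degree--matched terms on which each $h_k$ acts as the identity, I obtain
\[
s_{(n-p,p)}*s_\lambda \;=\; \sum_{\mu\vdash n-p} s_\mu\,s_{\lambda/\mu} \;-\; \sum_{\mu\vdash n-p+1} s_\mu\,s_{\lambda/\mu}.
\]
Extracting the coefficient of $s_\nu$ via the adjointness $\langle s_\mu s_{\lambda/\mu}, s_\nu\rangle = \langle s_{\lambda/\mu}, s_{\nu/\mu}\rangle$ and the Littlewood--Richardson rule then yields the alternating formula $g_{(n-p,p)\lambda}^{\nu} = \sum_{\mu\vdash n-p}\langle s_{\lambda/\mu}, s_{\nu/\mu}\rangle - \sum_{\mu\vdash n-p+1}\langle s_{\lambda/\mu}, s_{\nu/\mu}\rangle$, a difference of two manifestly nonnegative quantities, each counting pairs of \ssyt{} with \emph{Yamanouchi} reading words.

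The hard part is turning this alternating expression into the nonnegative sum $\sum_{\alpha\vdash p,\ \alpha\subseteq\lambda\cap\nu} k_{\alpha\nu}^{\lambda}$. Here my plan is a sign--reversing involution that pairs a configuration counted with the $-$ sign (where $|\mu|=n-p+1$) with one counted with the $+$ sign by adding or removing a single box from $\mu$. I expect the hypothesis $\lambda_1\geq 2p-1$ to be precisely what guarantees that this box move is always available and unambiguous: a sufficiently long first row prevents the two--row shape $(n-p,p)$ from producing boundary collisions, so that the uncancelled $+$ terms are exactly the \ssytx{} of shape $\lambda/\alpha$ and content $\nu/\alpha$ whose reverse reading word is an $\alpha$--lattice permutation, i.e. the Kronecker tableaux of Definition \ref{KT}. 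The two numerical conditions (i)--(ii) on the number of $1$'s and $2$'s, which refine the count when $\alpha_1>\alpha_2$, should record exactly which fillings are fixed points of the involution; checking that the involution is well defined and fixed--point--free off the Kronecker tableaux is where the genuine combinatorial work lies, and I anticipate it will be the main obstacle.

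Finally, part (b) should follow from part (a) by the conjugation symmetry of the Kronecker coefficients. Since conjugating the two indices other than $(n-p,p)$ leaves the coefficient unchanged, $g_{(n-p,p)\lambda}^{\nu} = g_{(n-p,p)\lambda'}^{\nu'}$, and since $\ell(\lambda)\geq 2p-1$ is equivalent to $\lambda'_1\geq 2p-1$, applying part (a) to $g_{(n-p,p)\lambda'}^{\nu'}$ gives $\sum_{\alpha\vdash p,\ \alpha\subseteq\lambda'\cap\nu'} k_{\alpha\nu'}^{\lambda'}$, which is exactly the asserted formula.
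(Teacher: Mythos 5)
First, a point of comparison: the paper does not prove Theorem \ref{ThmKT} at all. It is quoted from Ballantine and Orellana \cite{MR2264933} and used as a black box, so there is no internal proof to measure your attempt against. Judged on its own terms, your reduction is correct and is in fact the standard opening of the published proof: Jacobi--Trudi gives $s_{(n-p,p)}=h_{n-p}h_p-h_{n-p+1}h_{p-1}$; Littlewood's compatibility of the Kronecker product with the product and coproduct, together with $h_k*F=F$ on degree $k$, yields your alternating sum, which after reindexing the sums over $\mu\vdash n-p$ and $\mu\vdash n-p+1$ by the complementary partitions becomes $g_{(n-p,p)\lambda}^{\nu}=\sum_{\alpha\vdash p}\langle s_{\lambda/\alpha},s_{\nu/\alpha}\rangle-\sum_{\alpha\vdash p-1}\langle s_{\lambda/\alpha},s_{\nu/\alpha}\rangle$. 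Your derivation of part (b) from part (a) is also sound: conjugating $\lambda$ and $\nu$ leaves the Kronecker coefficient unchanged, and $\ell(\lambda)\geq 2p-1$ is equivalent to $\lambda'_1\geq 2p-1$.

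There is, however, a genuine gap: the entire combinatorial content of the theorem lies in passing from that alternating expression to the nonnegative count $\sum_{\alpha\vdash p,\,\alpha\subseteq\lambda\cap\nu}k_{\alpha\nu}^{\lambda}$, and your proposal only announces that a sign--reversing involution ``should'' do this rather than constructing one. Nothing in the write-up specifies the box-moving map, verifies that it injects the configurations indexed by $\alpha\vdash p-1$ into those indexed by $\alpha\vdash p$, or identifies the uncancelled objects with the fillings satisfying the $\alpha$--lattice condition and conditions (i)--(ii) of Definition \ref{KT}; the role of the hypothesis $\lambda_1\geq 2p-1$ is asserted by analogy (``prevents boundary collisions'') rather than demonstrated, and the appearance of the specific quantity $\alpha_1-\alpha_2$ in conditions (i)--(ii) is not explained at all. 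You are candid that this is where the work lies, but as written the argument establishes only the classical alternating formula, not the theorem. To close the gap you would need to either carry out the cancellation argument in full or, more honestly for the purposes of this paper, simply cite \cite{MR2264933} as the paper itself does.
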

The families that we study in this paper present some interesting stability properties. We start with examples of them. 

In Table \ref{table1} we present the values of $\overline{g}_{(k^a),(k^b)}^{(k)}$ that correspond to setting $a=b$ in Family 1. 

\begin{table}[h!]
\centering
\caption{Family 1: Case $b=a$, for $a=0,\dots, 5$}
\label{table1}
\begin{tabular}{c|rcccccccccccccc}
k & 0 & 1 & 2 & 3 & 4 & 5 & 6 & 7 & 8 & 9 & 10 & 11 & 12  & OEIS\footnotemark[1] \\ \hline
a= 0 &  1 & 0 & 0 & 0 & 0 & 0 & 0 & 0 & 0 & 0 & 0 & 0 & 0 & A000007 \\
a= 1 &  1 & 1 & 2 & 2 & 3 & 3 & 4 & 4 & 5 & 5 & 6 & 6 & 7 & A008619 \\
a= 2 &  1 & 1 & 3 & 4 & 7 & 9 & 14 & 17 & 24 & 29 & 38 & 45 & 57 & A266769 \\
a= 3 &  1 & 1 & 3 & 5 & 9 & 13 & 22 & 30 & 45 & 61 & 85 & 111 & 150 & A001993 \\
a= 4 & 1 & 1 & 3 & 5 & 10 & 15 & 26 & 38 & 60 & 85 & 125 & 172 & 243 & A070557 \\
a= 5 & 1 & 1 & 3 & 5 & 10 & 16 & 28 & 42 & 68 & 100 & 151 & 215 & 312 & A070558 
\end{tabular}
\end{table}
\footnotetext[1]{These references are taken out from the On--Line Encyclopedia of Integer Sequences, https://oeis.org/.}
We observe that the columns in Table \ref{table1} are stable sequences. According to the references in the On--Line Encyclopedia of Integer Sequences, the sequences appearing in Table \ref{table1} are related to Coxeter groups, braid groups and \emph{Molein series}, which are the generating functions attached to linear representations of a group on a finite dimensional vector space. 

Setting $b=a$ in Family 2, $\overline{g}^{(k)}_{\left( (k+i)^a \right) (k^a)} $, we obtain that after some initial zeros, the sequence defined by the non--zero reduced Kronecker coefficients is independent of $i$. In fact, it is equal to the sequence defined by Family 1 for $b=a$.  
Let see this phenomenon with an example that the reader should compare with Table \ref{table1}. 

\begin{table}[h!]	
\caption{Family 2: case $a=b=2$.}
\label{table2}
\begin{tabular}{c|rccccccccccccccccccc}
k & 0 & 1 & 2 & 3 & 4 & 5 & 6 & 7 & 8 & 9 & 10 & 11 & 12 & 13 & 14 & 15 & 16 \\ \hline
i= 0 & 1 & 1 & 3 & 4 & 7 & 9 & 14 & 17 & 24 & 29 & 38 & 45 & 57 & 66 & 81 & 93 & 111 \\
i= 1 & 0 & 0 & 0 & 1 & 1 & 3 & 4 & 7 & 9 & 14 & 17 & 24 & 29 & 38 & 45 & 57 & 66 \\
i= 2 & 0 & 0 & 0 & 0 & 0 & 0 & 1 & 1 & 3 & 4 & 7 & 9 & 14 &17 & 24 & 29 & 38  \\
i= 4 & 0 & 0 & 0 & 0 & 0 & 0 & 0 & 0 & 0 & 1 & 1 & 3 & 4 &  7 & 9 & 14 &17  \\
i= 5 & 0 & 0 & 0 & 0 & 0 & 0 & 0 & 0 & 0 & 0 & 0 & 0 & 1 & 1 & 3 & 4 &  7\\
\end{tabular}
\end{table}

Note that each row of Table \ref{table2} is exactly the shifting of the third row of Table \ref{table1} by a sequence of zeros. Note also that the columns are eventually constant with limit 0. 

Looking at the diagonals of Table \ref{table3}, we observe that Family 3 also contains stable sequences when $b=a+1$. 
\newpage
\begin{table}[h!]
\centering
\caption{Family 3: case $a=2$ and $b=3$.}
\label{table3}
\begin{tabular}{c|rccccccccccccccc}
k & 0 & 1 & 2 & 3 & 4 & 5 & 6 & 7 & 8 & 9 & 10 & 11 & 12  & 13 & OEIS \\ \hline
i= 0 &  \bf{1} & 1 & 3 & 4 & 7 & 9 & 14 & 17 & 24 & 29 & 38 & 45 & 57 & 66 & A266769 \\
i= 1 &  0 & \bf{1} & \bf{2} & 4 & 7 & 11 & 16 & 23 & 31 & 41 & 53 & 67 & 83 & 102& A000601 \\
i= 2 &  0 & 0 & \bf{1} & \bf{2} & \bf{5} & 8 & 14 & 20 & 30 & 40 & 55 & 70 & 91 & 112& A006918 \\
i= 3 &  0 & 0 & 0 & \bf{1} & \bf{2} & \bf{5} & \bf{9} & 15 & 23 & 34 & 47 & 64 & 84 & 108& A014126 \\
i= 4 &  0 & 0 & 0 & 0 & \bf{1} & \bf{2} & \bf{5} & \bf{9} & \bf{16} & 24 & 37 & 51 & 71 & 93&  \\
i= 5 & 0 & 0 & 0 & 0 & 0 & \bf{1} & \bf{2} & \bf{5} & \bf{9} & \bf{16} & \bf{25} & 38 & 54 & 75&  A175287 
\end{tabular}
\end{table}

We give the generating function of this sequence of stable values. For instance, in Table \ref{table3}, the resulting sequence is 1, 2, 5, 9, 16, 25, $\dots$ According to the references in the On--Line Encyclopedia of Integer Sequences, the sequences appearing in Table \ref{table3} are also related to Molein series.

Now we are ready to compute  the generating function for the families of reduced Kronecker coefficients that we consider. This translated immediately  to results for Kronecker coefficients, Corollary \ref{CorKC1} and Corollary \ref{CorKC2}.
\begin{theorem}\label{ThmGF}
We have the following generating functions for Families 1 and 2.
\begin{enumerate}
\item[\textbf{1.}] For $b=a$, the generating function of the reduced Kronecker coefficients of Family 1, $\overline{g}_{(k^a), (k^a)}^{(k)}$, is
\begin{eqnarray*}
\mathcal{F}_{a} = \frac{1}{(1-x)(1-x^2)^2 \cdots  (1-x^a)^2 (1-x^{a+1})}.
\end{eqnarray*}
\item[\textbf{2.}] For $b=a$, the generating function of the reduced Kronecker coefficients of Family 2, $\overline{g}_{((k+i)^a), (k^a)}^{(k)}$, for $k\geq \frac{a(a+1)}{2}\cdot i$ is also $\mathcal{F}_a$.
\item[\textbf{3.}] For $b\neq a$ in Families 1 and 2, the reduced Kronecker coefficients of these three families take values 0 or 1, depending on $k$ and $i$. 
\end{enumerate}
\end{theorem}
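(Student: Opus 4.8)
The plan is to pass from reduced to ordinary Kronecker coefficients, apply the combinatorial rule of Ballantine and Orellana, and then enumerate the resulting Kronecker tableaux by exhibiting a bijection with a family of colored partitions whose generating function is manifestly $\mathcal{F}_a$.

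First I would fix $k$ and use Murnaghan's Theorem \ref{MurnThm} together with the full $\S_3$--symmetry of the Kronecker coefficients and the explicit stability bound of Corollary \ref{StabilityBound} to realize each reduced coefficient as an honest Kronecker coefficient for $n$ large. For Family 1 with $b=a$ this gives
\[
\overline{g}_{(k^a),(k^a)}^{(k)} = g_{(n-k,k),\,(n-ka,k^a)}^{(n-ka,k^a)},
\]
the multiplicity of $s_\nu$ in $s_{(n-k,k)}\ast s_\lambda$ with $p=k$ and $\lambda=\nu=(n-ka,k^a)$. Since $\lambda_1=n-ka\geq 2k-1$ for $n$ large, part (a) of Theorem \ref{ThmKT} applies and yields
\[
\overline{g}_{(k^a),(k^a)}^{(k)} = \sum_{\substack{\alpha\vdash k\\ \alpha\subseteq\lambda}} k_{\alpha\lambda}^{\lambda},
\]
a finite sum (only $\alpha$ with $\ell(\alpha)\leq a+1$ can contribute, since $\lambda$ has $a+1$ rows) that is independent of $n$, which also re-confirms the stabilization.

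The heart of the argument is enumerating the Kronecker tableaux of shape $\lambda/\alpha$ and type $\lambda/\alpha$. Because the first row of $\lambda$ is arbitrarily long while the content forces all but a bounded number of its cells to be $1$'s, the reverse reading word and the $\alpha$--lattice condition constrain only the lower $a$ rows and the first few columns; the enumeration therefore stabilizes and depends only on a bounded "finite part" of the tableau, which I would record as a colored/weighted partition of $k$. The two copies of $(1-x^j)^{-1}$ for $2\leq j\leq a$ should come from two independent choices that the lattice--word bookkeeping allows for a part of size $j$, while the single factors $(1-x)^{-1}$ and $(1-x^{a+1})^{-1}$ correspond to the extremal rows, where the Kronecker condition (i) or (ii) of Definition \ref{KT} removes exactly one of the two otherwise-available choices. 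Summing over all $\alpha\vdash k$ would then give $\sum_k\big(\sum_\alpha k_{\alpha\lambda}^\lambda\big)x^k=\mathcal{F}_a$. I expect this bijection---translating semistandardness, the $\alpha$--lattice condition, and the Kronecker condition into the denominator $(1-x)(1-x^2)^2\cdots(1-x^a)^2(1-x^{a+1})$---to be the main obstacle, and the place where the precise form of Definition \ref{KT} is essential.

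For part 2 I would run the same reduction for $\overline{g}_{((k+i)^a),(k^a)}^{(k)}$, now with distinct shape $(n-(k+i)a,(k+i)^a)$ and type $(n-ka,k^a)$ summed over $\alpha\vdash k$. The extra $i$ in the larger partition forces the first cells of every admissible filling rigidly, producing the block of initial zeros seen in Table \ref{table2}; once $k\geq\frac{a(a+1)}{2}\,i$ this rigid part is saturated and the remaining freedom is exactly that of Family 1, so the tableau count agrees term-by-term with the $b=a$ case and the generating function is again $\mathcal{F}_a$ after the shift. Establishing the threshold $k\geq\frac{a(a+1)}{2}\,i$ precisely is the delicate point here. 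Finally, for part 3 I would analyze $b\neq a$: the mismatch between the number of rows of the shape and of the type ($a+1$ versus $b+1$) rigidifies the admissible fillings so that, for each $k$, at most one choice of $\alpha$ admits a (necessarily unique) Kronecker tableau, whence $\overline{g}\in\{0,1\}$; which value occurs is then read off from a size/divisibility condition on $k$ and $i$, completing the proof.
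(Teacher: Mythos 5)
Your reduction is the right one and matches the paper's: use Murnaghan's theorem with the stability bound of Corollary \ref{StabilityBound} (the paper takes $n=(a+3)k$, so $\lambda=\nu=(3k,k^a)$), invoke the $\S_3$--symmetry to put the two-row partition in the ``$(n-p,p)$'' slot, and apply Theorem \ref{ThmKT}(a) to get $\overline{g}_{(k^a),(k^a)}^{(k)}=\sum_{\alpha\vdash k,\;\ell(\alpha)\leq a+1}k_{\alpha\lambda}^{\lambda}$. You also correctly identify that the target generating function $\mathcal{F}_a$ is that of coloured partitions with one part of weight $1$, two of each weight $2,\dots,a$, and one of weight $a+1$, so a bijection with Kronecker tableaux is what is needed.

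The gap is that this bijection --- which is the entire content of the proof --- is never constructed; you explicitly defer it as ``the main obstacle.'' The heuristic you offer (two independent lattice-word choices per part of size $j$, with the Kronecker condition killing one choice in the extremal cases) is not what happens and would not assemble into a proof as stated. What the paper actually does is encode each coloured part $l\in\mathcal{B}_a=\{\overline{1},2,\overline{2},\dots,a,\overline{a},\overline{a+1}\}$ as a specific column of height $a+1$ with a prescribed filling and a prescribed number of shaded cells equal to its weight; the partition $\alpha\vdash k$ is then read off row-by-row from the shaded cells, the rows of $(3k,k^a)/\alpha$ are completed canonically, and one must verify semistandardness, the $\alpha$--lattice condition, the Kronecker $\alpha$--condition, injectivity, and (the hardest part) surjectivity, which requires classifying all columns that can occur in such a tableau. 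None of this is present or forced by your outline. The same issue recurs in part~2: the threshold $k\geq\frac{a(a+1)}{2}i$ is asserted but not derived --- the paper obtains it by showing the lattice-word condition forces $\alpha_j-\alpha_{j+1}\geq i$ for all $j$, hence $k=|\alpha|\geq\frac{a(a+1)}{2}i$, after a separate argument disposing of the range $k<\frac{(a+1)i}{2}$ where the shape itself must be modified --- and in part~3 the $0$/$1$ claim is only a plausibility statement. So the proposal is a correct plan with the correct first step, but the proof itself is missing.
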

We present the proof of Theorem \ref{ThmGF} after the proof following result.

\begin{theorem}\label{ThmFam3}
The generating function for Family 3 is:
\begin{enumerate}
\item[\textbf{1.}]  For $b=a+1$, the stable value of the $j^{th}$ diagonal corresponds to the reduced Kronecker coefficients $\overline{g}^{(k)}_{(k^a), (2k-j,k^{a-1})}$, for $k\geq 2j$. Their generating function is 
\begin{eqnarray*}
\mathcal{G}_a = \frac{1}{(1-x)^2(1-x^2)^3\dots (1-x^{a-1})^3(1-x^a)^2(1-x^{a+1})}.
\end{eqnarray*}
In this case, we denote by $\overline{\overline{g}}_a(j)$ the stable value of the $j^{th}$ diagonal of the reduced Kronecker coefficients of Family 3, for $b=a+1$. 
\item[\textbf{2.}]  For $b\neq a+1$ in Family 3, the reduced Kronecker coefficients of this family take values 0 or 1, depending on $k$ and $i$. 
\end{enumerate}
\end{theorem}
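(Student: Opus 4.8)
The plan is to compute the coefficients of Family 3 by turning them into counts of Kronecker tableaux and then reading off the generating function from a weight--preserving bijection. First I would use Murnaghan's Theorem \ref{MurnThm}, with the effective bound of Corollary \ref{StabilityBound}, to replace the reduced coefficient $\overline{g}_{(k^{a+1}),(k+i,k^{a})}^{(k)}$ (the case $b=a+1$) by the honest Kronecker coefficient $g_{\alpha[n]\,\beta[n]}^{\gamma[n]}$ for $n$ at least the stabilisation bound, where $\alpha=(k^{a+1})$, $\beta=(k+i,k^{a})$ and $\gamma=(k)$. The essential point is that $\gamma=(k)$ forces $\gamma[n]=(n-k,k)$ to be a two--row shape. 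By the symmetry of the Kronecker coefficients this equals the multiplicity of $s_{\alpha[n]}$ in $s_{(n-k,k)}\ast s_{\beta[n]}$, and since the first part of $\beta[n]$ exceeds $2k-1$ for $n$ large, Theorem \ref{ThmKT}(a) applies with $p=k$ and rewrites the coefficient as $\sum_{\rho\vdash k,\ \rho\subseteq\beta[n]\cap\alpha[n]}k_{\rho\,\alpha[n]}^{\beta[n]}$, a finite sum of counts of Kronecker tableaux of shape $\beta[n]/\rho$ and type $\alpha[n]/\rho$.

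Next I would analyse the stable shape of these tableaux. For $n$ large the very long first row of $\beta[n]/\rho$ must be filled entirely with $1$'s, and semistandardness together with the $\rho$--lattice condition of Definition \ref{KT} rigidifies the remaining $a+1$ rows; I expect the count to become independent of $n$, which simultaneously re--proves the stabilisation and isolates a finite model. Writing $i=k-j$ on the $j^{th}$ diagonal, so that $k+i=2k-j$, I would then prove the diagonal stabilisation for $k\geq 2j$ by a peeling argument: once $k$ is large compared with $j$, one full length--$k$ row of both the shape and the type is forced, and deleting it identifies the count with the one computing $\overline{g}_{(k^{a}),(2k-j,k^{a-1})}^{(k)}$. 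This yields the claimed description of the stable value $\overline{\overline{g}}_a(j)$.

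For the generating function I would construct a weight--preserving bijection between the stabilised Kronecker tableaux of the $j^{th}$ diagonal and the coloured partitions of $j$ in which a part of size $m$ may carry one of $c_m$ colours, where $(c_1,\dots,c_{a+1})=(2,3,\dots,3,2,1)$; summing $x^{j}$ over all such objects gives precisely $\mathcal{G}_a=\prod_{m=1}^{a+1}(1-x^{m})^{-c_m}$. The colours should encode the degrees of freedom that survive after the rigid part of the filling is fixed -- the choice of inner shape $\rho$, the way the repeated entries are shared among the rectangular rows, and the extra choice carried by conditions (i)--(ii) of Definition \ref{KT} -- and the ranges of these choices are exactly what produce the multiplicities $2,3,\dots,3,2,1$. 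The case $b\neq a+1$ falls out of the same analysis: the unbalanced number of rows of $\alpha[n]$ and $\beta[n]$ leaves at most one admissible filling, so the Kronecker tableau count, and hence the reduced coefficient, is $0$ or $1$ according to feasibility constraints on $k$ and $i$.

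The main obstacle will be this last step: pinning down the exact stable filling and building the bijection so that the colour multiplicities come out to be precisely $(2,3,\dots,3,2,1)$. The delicate interactions are between the $\rho$--lattice word and the shift by $i$ along the diagonal, and the bookkeeping of the auxiliary conditions (i)--(ii), which is exactly what makes the exponent pattern of $\mathcal{G}_a$ differ from that of $\mathcal{F}_a$ in Theorem \ref{ThmGF}.
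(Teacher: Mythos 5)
Your outline follows the paper's proof step for step: Murnaghan stabilisation with the bound from Corollary \ref{StabilityBound}, Theorem \ref{ThmKT} with $p=k$ to convert the coefficient into a count of Kronecker tableaux, and then a weight-preserving bijection with coloured partitions of $j$ whose colour multiplicities $(2,3,\dots,3,2,1)$ produce $\mathcal{G}_a$. The problem is that everything you defer as ``the main obstacle'' \emph{is} the proof. The paper's bijection assigns to each coloured part of weight $l$ one of two or three explicit columns of height $a+1$, distinguished by which entries they contain and by how many of their top cells are absorbed into the inner partition $\alpha$ ($l$ cells for the parts $l$ and $\overline{l}$, $l+1$ for $\overline{\overline{l}}$, with analogous prescriptions at the boundary weights $1$, $a$, $a+1$); it then pads with $m_0$ copies of a fixed auxiliary column so that the shaded cells form a partition of $k$ (this is where the hypothesis $k\geq 2j$ enters), completes the filling deterministically, and verifies semistandardness, the $\alpha$-lattice condition row by row, and the $\alpha$-condition. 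Surjectivity requires a separate classification proving that no other column types can occur in such a tableau. None of this is routine, and without it the identification of the two generating functions is not established.

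Moreover, the mechanism you propose for producing the exponents of $\mathcal{G}_a$ is not the right one. You locate the colours in the choice of inner shape $\rho$, in how repeated entries are shared among the rectangular rows, and in ``the extra choice carried by conditions (i)--(ii) of Definition \ref{KT}.'' In the actual bijection the inner shape is \emph{determined} by the coloured partition (it is read off from the shaded cells of the chosen columns), and conditions (i)--(ii) contribute no degrees of freedom at all: one checks that $(\#2)_{R1}=\alpha_1-\alpha_2$ holds automatically for every tableau the algorithm produces. The multiplicity $3$ at the intermediate weights, versus $2$ in Family 1, comes from the existence of a third admissible column type $\overline{\overline{l}}$ (one carrying a $2$ in a low row), which is only available because the type partition now has two distinct large parts $2k+j$ and $2k-j$. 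The same classification of admissible columns is also what shows that for $b\neq a+1$ at most one tableau survives, so the $0$/$1$ claim likewise rests on the part of the argument you have not carried out.
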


\begin{proof}
For $b=a+1$, we look at the element of the $j^{th}$ diagonal. Fix positive integers $a$ and $j$. 
 The $j^{th}$ diagonal is describe by the coefficients $\overline{g}^{(k)}_{(k^a),(2k-j,k^{a-1}) }$, with $j=k-i\geq 0$ and $k\geq 2j$.
 
 By Corollary \ref{StabilityBound}, we can take $N=(a+4)k$, and these reduced Kronecker coefficients correspond to the following Kronecker coefficients:
\begin{eqnarray*}
\overline{g}^{(k)}_{(k^a)(2k-j,k^{a-1}) }= g^{((a+2)k,k)}_{(3k,k^a)(2k+j,2k-j,k^{a-1}) }.
\end{eqnarray*}
Applying Theorem \ref{ThmKT}, these coefficients count the Kronecker tableaux of shape $(3k,k^a)/\alpha$ and type $(2k+j,2k-j,k^{a-1})/\alpha$, with $\alpha \vdash k$ and $\ell(\alpha)\leq a+1$. 

It is immediate that the generating function $\mathcal{G}_a$ counts the coloured partitions with parts in $\mathcal{C}_a = \{1,\overline{1},2,\overline{2},\overline{\overline{2}},\dots, a-1,\overline{a-1},\overline{\overline{a-1}}, a,\overline{a},a+1\}$, for which the parts $i$, $\overline{i}$ and $\overline{\overline{i}}$ have weight $i$.  

To prove that Theorem \ref{ThmFam3} holds, we give a bijective map between coloured partitions with parts in $\mathcal{C}_a$ and  Kronecker tableaux of shape $(3k,k^a)/\alpha$ and type $(2k+j,2k-j,k^{a-1})/\alpha$, with $\alpha \vdash k$ and $\ell(\alpha)\leq a+1$.  

The bijection is defined by the following algorithm: to a coloured partition $\beta$ of $j$ with parts in $\mathcal{C}_a$, we associate a Kronecker tableau $T(\beta)$ as follows. 
First, we identify each element of $\mathcal{C}_a$ with a column of height $a+1$: \vspace{0.3cm}

\scalebox{0.7}{
\begin{tikzpicture}
\draw (0,0) rectangle (1,3.5); 
\draw (0,0) rectangle (1,0.5); 
\draw (0,0.5) rectangle (1,1);
\draw (0,1) rectangle (1,1.5);
\draw (0,1.5) rectangle (1,2);
\draw (0,2) rectangle (1,2.5);
\draw (0,2.5) rectangle (1,3);
\draw[fill=blue!40] (0,3) rectangle (1,3.5);
\node at (0.5,-0.5) {$1$};
\node at (0.5,0.25) {$a+1$};
\node at (0.5,0.75) {$\vdots$};
\node at (0.5,1.25) {$5$};
\node at (0.5,1.75) {$4$};
\node at (0.5,2.25) {$3$};
\node at (0.5,2.75) {$1$};
\draw[|-|] (1.25,3) --(1.25,3.5);
\node at (1.5, 3.25) {$1$};

\draw (2.5,0) rectangle (3.5,3.5); 
\draw (2.5,0) rectangle (3.5,0.5); 
\draw (2.5,0.5) rectangle (3.5,1);
\draw (2.5,1) rectangle (3.5,1.5);
\draw (2.5,1.5) rectangle (3.5,2);
\draw (2.5,2) rectangle (3.5,2.5);
\draw[fill=blue!40] (2.5,2.5) rectangle (3.5,3);
\draw[fill=blue!40] (2.5,3) rectangle (3.5,3.5);
\node at (3,-0.5) {$\overline{1}$};
\node at (3,0.25) {$a+1$};
\node at (3,0.75) {$\vdots$};
\node at (3,1.25) {$5$};
\node at (3,1.75) {$4$};
\node at (3,2.25) {$2$};
\draw[|-|] (3.75,2.5) --(3.75,3.5);
\node at (4, 3) {$2$};

\draw (5,0) rectangle (6,3.5); 
\draw (5,0) rectangle (6,0.5); 
\draw (5,0.5) rectangle (6,1);
\draw (5,1) rectangle (6,1.5);
\draw (5,1.5) rectangle (6,2);
\draw (5,2) rectangle (6,2.5);
\draw[fill=blue!40] (5,2.5) rectangle (6,3);
\draw[fill=blue!40] (5,3) rectangle (6,3.5);
\node at (5.5,-0.5) {$l$};
\node at (5.5,0.25) {$a+1$};
\node at (5.5,0.75) {$\vdots$};
\node at (5.5,1.25) {$l+3$};
\node at (5.5,1.75) {$l+2$};
\node at (5.5,2.25) {$1$};
\draw[|-|] (6.25,2.5) --(6.25,3.5);
\node at (6.5, 3) {$l$};

\draw (7.5,0) rectangle (8.5,3.5); 
\draw (7.5,0) rectangle (8.5,0.5); 
\draw (7.5,0.5) rectangle (8.5,1);
\draw (7.5,1) rectangle (8.5,1.5);
\draw (7.5,1.5) rectangle (8.5,2);
\draw (7.5,2) rectangle (8.5,2.5);
\draw[fill=blue!40] (7.5,2.5) rectangle (8.5,3);
\draw[fill=blue!40] (7.5,3) rectangle (8.5,3.5);
\node at (8,-0.5) {$\overline{l}$};
\node at (8,0.25) {$a+1$};
\node at (8,0.75) {$\vdots$};
\node at (8,1.25) {$l+3$};
\node at (8,1.75) {$l+2$};
\node at (8,2.25) {$l+1$};
\draw[|-|] (8.75,2.5) --(8.75,3.5);
\node at (9, 3) {$l$};

\draw (10,0) rectangle (11,3); 
\draw (10,0) rectangle (11,0.5); 
\draw (10,0.5) rectangle (11,1);
\draw (10,1) rectangle (11,1.5);
\draw (10,1.5) rectangle (11,2);
\draw[fill=blue!40] (10,2) rectangle (11,2.5);
\draw[fill=blue!40] (10,2.5) rectangle (11,3);
\draw[fill=blue!40] (10,3) rectangle (11,3.5);
\node at (10.5,-0.5) {$\overline{\overline{l}}$};
\node at (10.5,0.25) {$a+1$};
\node at (10.5,0.75) {$\vdots$};
\node at (10.5,1.25) {$l+3$};
\node at (10.5,1.75) {$2$};
\draw[|-|] (11.25,2) --(11.25,3.5);
\node at (11.75, 2.75) {$l+1$};

\draw (12.5,0) rectangle (13.5,3); 
\node at (13,0.25) {$1$};
\draw(12.5,0) rectangle (13.5,0.5); 
\draw[fill=blue!40] (12.5,0.5) rectangle (13.5,1);
\draw[fill=blue!40] (12.5,1) rectangle (13.5,1.5);
\draw[fill=blue!40] (12.5,1.5) rectangle (13.5,2);
\draw[fill=blue!40] (12.5,2) rectangle (13.5,2.5);
\draw[fill=blue!40] (12.5,2.5) rectangle (13.5,3);
\draw[fill=blue!40] (12.5,3) rectangle (13.5,3.5);
\node at (13,-0.5) {$a$};

\draw (14.5,0) rectangle (15.5,3); 
\node at (15,0.25) {$a+1$};
\draw (14.5,0) rectangle (15.5,0.5); 
\draw[fill=blue!40] (14.5,0.5) rectangle (15.5,1);
\draw[fill=blue!40] (14.5,1) rectangle (15.5,1.5);
\draw[fill=blue!40] (14.5,1.5) rectangle (15.5,2);
\draw[fill=blue!40] (14.5,2) rectangle (15.5,2.5);
\draw[fill=blue!40] (14.5,2.5) rectangle (15.5,3);
\draw[fill=blue!40] (14.5,3) rectangle (15.5,3.5);
\node at (15,-0.5) {$\overline{a}$};

\draw[fill=blue!40] (16.5,0) rectangle (17.5,3); 
\draw[fill=blue!40] (16.5,0) rectangle (17.5,0.5); 
\draw[fill=blue!40] (16.5,0.5) rectangle (17.5,1);
\draw[fill=blue!40] (16.5,1) rectangle (17.5,1.5);
\draw[fill=blue!40] (16.5,1.5) rectangle (17.5,2);
\draw[fill=blue!40] (16.5,2) rectangle (17.5,2.5);
\draw[fill=blue!40] (16.5,2.5) rectangle (17.5,3);
\draw[fill=blue!40] (16.5,3) rectangle (17.5,3.5);
\node at (17,-0.5) {$a+1$};
\end{tikzpicture}}

for $l=2, \dots, a-1$. 
The partition $\alpha$ of $k$ is defined by counting all shaded boxes on the Kronecker tableau $T(\beta)$. If we consider $\beta=(\overline{1}^j)$, we have $2j$ shaded boxes. Then, $k\geq 2j$. We consider partitions $\alpha$ of $k$. Note that for $\beta=(1^j)$, there are only $j$ shaded boxes, which is not enough to obtain $\alpha$. 
That is why, the next step is to add as many columns as shaded boxes we need in order to obtain a partition of $k$. Denote by $m_l$, with $l\in \mathcal{C}_a$, the number of times that the part $l$ appears in $\beta$. Until that moment, we have the following number of shaded boxes
\begin{eqnarray*}
N_\beta= \sum_{l=1}^{a+1}l\cdot m_l + \sum_{l=2}^a l\cdot m_{\overline{l}} + 2m_{\overline{1}} + \sum_{l=2}^{a-1} (l+1)\cdot m_{\overline{\overline{l}}}.
\end{eqnarray*} 
We also know that $\beta$ is a coloured partition of $j$,
\begin{eqnarray}\label{j3}
j=\sum_{l=1}^{a+1}l\cdot m_l + \sum_{l=1}^a l\cdot m_{\overline{l}} + \sum_{l=2}^{a-1} l\cdot m_{\overline{\overline{l}}}.
\end{eqnarray}
Let us define $m_0$ as $k$ minus the number of shaded boxes that we already have, $N_\beta$. Then,
\begin{eqnarray*}
m_0 = k-j-m_{\overline{1}} - \sum_{l=2}^{a-1} m_{\overline{\overline{l}}}.
\end{eqnarray*}
At this point, we add $m_0$ copies of the following column of height $a+1$: \vspace{0.25cm}
\begin{center}
\scalebox{0.8}{
\begin{tikzpicture}
\draw (0,0) rectangle (1,3.5); 
\draw (0,0) rectangle (1,0.5); 
\draw (0,0.5) rectangle (1,1);
\draw (0,1) rectangle (1,1.5);
\draw (0,1.5) rectangle (1,2);
\draw (0,2) rectangle (1,2.5);
\draw (0,2.5) rectangle (1,3);
\draw[fill=blue!40] (0,3) rectangle (1,3.5);
\node at (0.5,0.25) {$a+1$};
\node at (0.5,0.75) {$\vdots$};
\node at (0.5,1.25) {$5$};
\node at (0.5,1.75) {$4$};
\node at (0.5,2.25) {$3$};
\node at (0.5,2.75) {$2$};
\draw[|-|] (1.25,3) --(1.25,3.5);
\node at (1.5, 3.25) {$1$};
\end{tikzpicture}}
\end{center}
Then, $\alpha$ is the partition defined by the shaded boxes, as follows:
\begin{eqnarray}\label{alpha3}
\alpha_{a+1} &=& m_{a+1}, \nonumber \\
\alpha_l &=& \alpha_{l+1} +m_l + m_{\overline{l}} + m_{\overline{\overline{l-1}}}, \hspace{0.5cm} \text{for }l=3,\dots, a, \\
\alpha_2 &=& \alpha_3 + m_2 + m_{\overline{2}} + m_{\overline{1}} , \nonumber \\
\alpha_1 &=& \alpha_2 +m_1 +m_0.  \nonumber 
\end{eqnarray}
Note that there exists only a way to order all these columns in such a way that they form a \ssyt.
These columns correspond to the first columns on the left side of $T(\beta)$.

Before continuing with the algorithm, let see an example: take $a=3$, $j=3$, $k=7$ and the coloured partition $\beta=(\overline{\overline{2}},1)$. Then, the corresponding Kronecker tableau obtained by our algorithm is
\begin{center}
\scalebox{0.8}{
\begin{tikzpicture}
\draw[fill=blue!40] (0,0.5)--(0.5,0.5)--(0.5,1.5)--(2.5,1.5)--(2.5,2)--
(0,2)--(0,0.5);
\draw (0,0) rectangle (3.5,2);
\draw (0,1.5) rectangle (10.5,2);

\draw (0,0.5)--(3.5,0.5);
\draw (0,1)--(3.5,1);
\draw (0,1.5)--(3.5,1.5);
\draw (0.5,0) --(0.5,2);
\draw (1,0) --(1,2);
\draw (1.5,0) --(1.5,2);
\draw (2,0) --(2,2);
\draw (2.5,0) --(2.5,2);
\draw (3,0) --(3,2);
\draw (3.5,0) --(3.5,2);
\draw (4,1.5) --(4,2);
\draw (4.5,1.5)--(4.5,2);
\draw (5,1.5) --(5,2);
\draw (5.5,1.5)--(5.5,2);
\draw (6,1.5) --(6,2);
\draw (6.5,1.5)--(6.5,2);
\draw (7,1.5) --(7,2);
\draw (7.5,1.5)--(7.5,2);
\draw (8,1.5) --(8,2);
\draw (8.5,1.5)--(8.5,2);
\draw (9,1.5) --(9,2);
\draw (9.5,1.5)--(9.5,2);
\draw (10,1.5) --(10,2);
\draw (10.5,1.5)--(10.5,2);

\node at (0.25,0.25) {2};
\node at (0.75,0.25) {4};
\node at (1.25,0.25) {4};
\node at (1.75,0.25) {4};
\node at (2.25,0.25) {4};
\node at (2.75,0.25) {4};
\node at (3.25,0.25) {4};

\node at (0.75,0.75) {3};
\node at (1.25,0.75) {3};
\node at (1.75,0.75) {3};
\node at (2.25,0.75) {3};
\node at (2.75,0.75) {3};
\node at (3.25,0.75) {3};

\node at (0.75,1.25) {1};
\node at (1.25,1.25) {2};
\node at (1.75,1.25) {2};
\node at (2.25,1.25) {2};
\node at (2.75,1.25) {2};
\node at (3.25,1.25) {2};

\node at (2.75,1.75) {1};
\node at (3.25,1.75) {1};
\node at (3.75,1.75) {1};
\node at (4.25,1.75) {1};
\node at (4.75,1.75) {1};
\node at (5.25,1.75) {1};
\node at (5.75,1.75) {1};
\node at (6.25,1.75) {1};
\node at (6.75,1.75) {1};
\node at (7.25,1.75) {1};
\node at (7.75,1.75) {1};
\node at (8.25,1.75) {2};
\node at (8.75,1.75) {2};
\node at (9.25,1.75) {2};
\node at (9.75,1.75) {2};
\node at (10.25,1.75) {4};
\end{tikzpicture}}
\end{center}
To finish the construction of $T(\beta)$, we proceed as follows: the $l^{th}$ row is filled with $l$, for $l=2,\dots a+1$, and the first row is filled with the remaining numbers of the type $(2k+j,2k-j,k^{a-1})/\alpha$ in weakly increasing order from left to right.

Let us check that $T(\beta)$ is a Kronecker tableau, and hence, that the map is well--defined. 
\begin{itemize}
\item By construction, $T(\beta)$ is a \ssyt of shape $(3k+i,k^a)/\alpha$ and type $(2k+j,2k-j,k^{a-1})/\alpha$, where $\alpha$ is a partition of $k$ and $\ell(\alpha)\leq a+1$.

\item The sequence $\alpha$ defined by counting the shaded boxes is a partition of $k$. By the recurrence \eqref{alpha3} that describes $\alpha$, the sequence is clearly a partition. Let us see that the sum of its parts is $k$. We express $\alpha$ in terms of $m_l$, with $l\in\mathcal{C}_a \cup \{0 \}$. 
\begin{eqnarray*}
\alpha_{a+1} &=& m_{a+1} , \\
\alpha_l &=& \sum_{k=l}^{a+1} m_k + \sum_{k=l}^{a} m_{\overline{k}} + \sum_{k=l}^{a-1} m_{\overline{\overline{k-1}}}, \hspace{0.5cm} \text{for }l=3,\dots, a, \\
\alpha_2 &=& \sum_{k=2}^{a+1} m_k + \sum_{k=1}^{a} m_{\overline{k}} + \sum_{k=2}^{a-1} m_{\overline{\overline{k}}} , \hspace{0.5cm} \text{and, } \\
\alpha_1 &=& k-j + \sum_{k=1}^{a+1} m_k + \sum_{k=2}^ a m_{\overline{k}},
\end{eqnarray*}
where we use directly the definition of $m_0$ to compute $\alpha_1$. 
Then, 
\begin{multline*}
\sum_{l=1}^{a+1} \alpha_l = m_{a+1} + \sum_{l=3}^a \left( \sum_{k=l}^{a+1} m_k + \sum_{k=l}^{a} m_{\overline{k}} + \sum_{k=l}^{a-1} m_{\overline{\overline{k-1}}} \right) + \sum_{k=2}^{a+1} m_k + \sum_{k=1}^{a} m_{\overline{k}} + \sum_{k=2}^{a-1} m_{\overline{\overline{k}}} +\\ 
+ k-j + \sum_{k=1}^{a+1} m_k + \sum_{k=2}^ a m_{\overline{k}} 
= k-j +\sum_{l=1}^{a+1} l\cdot m_l + \sum_{l=1}^a l\cdot m_{\overline{l}} + \sum_{l=2}^{a-1}l\cdot m_{\overline{\overline{l}}} = k .
\end{multline*}

\item We need to check also that $(\# 1)_{R1} \geq k- \alpha_1$. Otherwise, we have a column that we cannot fill.

We count $(\# 1)_{R1}$ as the total number of $1'$s minus the \fbox{1} boxes in all rows different from the first one, i.e. $(\# 1)_{R1} = 2k+j-\alpha_1 - \sum_{l=1}^a m_{l}$.
Then, by \eqref{j3}, $k+j -\sum_{l=1}^a m_l \geq 0$.

\item The reverse word is an $\alpha$--lattice permutation. 

The reverse reading word of $T(\beta)$ is of the form
\begin{multline*}
\left(\# a+1\right)_{R1}\dots \left(\# 1\right)_{R1} \left(\# 2\right)_{R2} \left(\# 1\right)_{R2} \dots \\ \dots \left(\# l\right)_{Rl} \left(\#2\right)_{Rl} \left(\# 1\right)_{Rl} \dots \left(\# a+1\right)_{Ra+1} \left(\# 2\right)_{Ra+1} \left(\# 1\right)_{Ra+1}.
\end{multline*}
We proceed to check that this sequence is an $\alpha$--lattice permutation following the sequence from left to right.  
\begin{itemize}
\item At the level of the first row, we check that $\alpha_{l+1} +(\# l+1)_{R1} \leq \alpha_l$.

For $l=3,\dots,a$, we have that $(\#l+1)_{R1} = (\#1)_{Rl+1} + (\#2)_{Rl+1}$. We take the count as the total number of $(l+1)'$s minus the number of \fbox{$l+1$} boxes of the $(l+1)^{th}$. Then,
\begin{eqnarray*}
\alpha_{l+1} + (\# l+1)_{R1} = \alpha_{l+1} +m_l+m_{\overline{\overline{l-1}}} \leq \alpha_{l+1} +m_l+m_{\overline{\overline{l-1}}} + m_{\overline{l}} = \alpha_l.
\end{eqnarray*}
For $l=2$, counting in the same way, we have that $(\#3)_{R1}= m_2+m_{\overline{1}}$. Then,
\begin{eqnarray*}
\alpha_3 + (\#3)_{R1} = \alpha_2+m_2+m_{\overline{1}} \leq \alpha_2+m_2+m_{\overline{2}}+ m_{\overline{1}} = \alpha_2.
\end{eqnarray*}
For $l=1$, we count $(\#2)_{R1}$.
\begin{eqnarray}\label{alphacond3}
(\#2)_{R1} = \underbrace{2k-j-\alpha_2}_{\text{total}}- \underbrace{(m_0+k-\alpha_1)}_{2^{nd}\text{row}} -\underbrace{m_{\overline{1}} - \sum_{l=2}^{a-1}m_{\overline{\overline{l}}}}_{\text{other rows}} = \alpha_1-\alpha_2.
\end{eqnarray}
Then, $\alpha_2+ (\#2)_{R1} = \alpha_1$. 

\item At the level of the second row, we check that $\alpha_2+ (\#2)_{R1}+(\#2)_{R2} = \alpha_1+(\#1)_{R1}$.

By \eqref{alphacond3}, we only need to check that $(\#2)_{R2}\leq (\#1)_{R1}$. Since $(\#2)_{R2}=m_0+k-\alpha_1$ and $(\#1)_{R1} \geq 3k-\alpha_1$, by the definition of $m_0$, we have that
$2k-m_0 =k+j+\sum_{l=2}^{a-1}m_{\overline{\overline{l}}}+m_{\overline{1}} \geq 0$. This implies that
\begin{eqnarray*}
(\#2)_{R2} = m_0+k-\alpha_1 \leq 3k-\alpha_1 \leq (\#1)_{R1}.
\end{eqnarray*}  

\item At the level of the $l^{th}$ row, for $l=3,\dots,a$, we have that
\begin{eqnarray*}
\alpha_{l+1} + (\#l+1)_{R1} + (\#l+1)_{Rl+1} = \alpha_{l} + (\#l)_{R1} + (\#l)_{Rl},
\end{eqnarray*}
since the left--hand side is exactly the total number of $(l+1)$ plus $\alpha_{l+1}$ and the right--hand side is exactly the total number of $l$ plus $\alpha_{l}$. Thus, both sides are equal to $k$.

For $l=2$, we check that $\alpha_{3} + (\#3)_{R1} + (\#3)_{R3} \leq  \alpha_{2} + (\#2)_{R1} + (\#2)_{R2}$. The left--hand side is exactly the total number of $3'$s plus $\alpha_3$, i.e. $k$. The right--hand side is $k+m_0$, using \eqref{alphacond3} and that $(\#2)_{R2} =m_0+k-\alpha_1$. 

Finally, we check all the inequalities related to the boxes \fbox{2} and \fbox{1} involving more rows than the first and the second ones, i.e, for $s=3,\dots,a$, we check that $ \displaystyle{\alpha_2 + \sum_{l=1}^{s} (\#2)_{Rl} \leq \alpha_1 + \sum_{l=1}^{s-1} (\#1)_{Rl}}$.
We have that
\begin{eqnarray*}
\sum_{l=1}^{s} (\#2)_{Rl} &=& 2k-j-\alpha_2 - \sum_{l=s+1}^{a+1}(\#2)_{Rl} = 2k-j-\alpha_2 - \sum_{l=s}^{a-1} m_{\overline{\overline{l}}}, \\
\sum_{l=1}^{s-1} (\#1)_{Rl} &=& 2k+j-\alpha_1 - \sum_{l=s}^{a+1}(\#1)_{Rl} = 2k+j-\alpha_1 - \sum_{l=s}^{a} m_l.
\end{eqnarray*}
Then, the inequality $\displaystyle{\alpha_2 + \sum_{l=1}^{s} (\#2)_{Rl} \leq \alpha_1 + \sum_{l=1}^{s-1}}$ follows from \eqref{j3} and the fact that $\displaystyle{2j + \sum_{l=s}^{a-1}m_{\overline{\overline{l}}} - \sum_{l=s}^a m_l \geq 0}$.
\end{itemize}

\item For the $\alpha$--condition, if $\alpha_1=\alpha_2$, we have nothing to prove, and if $\alpha_1>\alpha_2$, we have that $(\#2)_{R1} = \alpha_1-\alpha_2$ by \eqref{alphacond3}.
\end{itemize}
Then, the \ssyt\ $T(\beta)$ defined by the algorithm is a Kronecker tableau and the map is well--defined and injective. 

Finally, we show that the map is also surjective. 
Consider a Kronecker tableau $T$ with shape $\lambda/\alpha=(3k,k^a)/\alpha$, type $\nu/\alpha=(2k+j,2k-j,k^{a-1})/\alpha$ and $\alpha$ a partition of $k$, with $\ell(\alpha)\leq a+1$. We will define the associated coloured partition $\beta$ of $j$ with parts in $\mathcal{C}_a$.
We start studying our initial Kronecker tableau, $T$. It has the following form
\begin{center}
\scalebox{0.8}{
\begin{tikzpicture}
\draw (1.5,1) rectangle (4.2,3.5);
\draw (4.2,3) rectangle (6.9,3.5);
\draw (6.9,3) rectangle (9.6,3.5);
\draw (1.5,1) rectangle (4.2,1.5);
\draw (1.5,1.5) rectangle (4.2,2);
\draw (1.5,2) rectangle (4.2,2.5);
\draw (1.5,2.5) rectangle (4.2,3);
\draw (1.5,3) rectangle (4.2,3.5);
\draw (3.2,1) rectangle (4.2,3.5);

\draw[fill=blue!40] (1.5,1) --(2,1) --(2,1.5) --(2.7,2) --(2.7,2.5) --(2.9,2.5) --(2.9,3) --(3.2,3) --(3.2,3.5) --(1.5,3.5)--(1.5,1);
\draw[|-|] (1.5,0.75) --(3.2,0.75);
\draw[|-|] (1.25,1) --(1.25,3.5);
\draw[|-|] (1.5,3.75) --(4.2, 3.75);
\draw[|-|] (4.2,3.75) --(6.9,3.75);
\draw[|-|] (6.9,3.75) --(9.6,3.75);

\node at (2.41,0.5) {$\alpha_1$};
\node at (0.75,2.25) {$a+1$};
\node at (2.81,4) {$k$};
\node at (5.55,4) {$k$};
\node at (8.25,4) {$k$};
\node at (3.7,1.25) {$a+1$};
\node at (3.7,1.8) {$\vdots$};
\node at (3.7,2.25) {$3$};
\node at (3.7,2.75) {$2$};
\node at (3.7,3.25) {$1$};
\end{tikzpicture}}
\end{center}
Excluding all columns of height 1 and of height $a+1$ with no shaded boxes, we claim that the following list summarizes all possible columns that appear in the remaining part of $T$.
\begin{center}
\scalebox{0.65}{
\begin{tikzpicture}
\draw (-2.5,0) rectangle (-1.5,3.5); 
\draw (-2.5,0) rectangle (-1.5,0.5); 
\draw (-2.5,0.5) rectangle (-1.5,1);
\draw (-2.5,1) rectangle (-1.5,1.5);
\draw (-2.5,1.5) rectangle (-1.5,2);
\draw (-2.5,2) rectangle (-1.5,2.5);
\draw (-2.5,2.5) rectangle (-1.5,3);
\draw[fill=blue!40] (-2.5,3) rectangle (-1.5,3.5);
\node at (-2,0.25) {$a+1$};
\node at (-2,0.75) {$\vdots$};
\node at (-2,1.25) {$5$};
\node at (-2,1.75) {$4$};
\node at (-2,2.25) {$3$};
\node at (-2,2.75) {$2$};
\draw[|-|] (-1.25,3) --(-1.25,3.5);
\node at (-1, 3.25) {$1$};
\node at (-2,-0.5) {0};

\draw (0,0) rectangle (1,3.5); 
\draw (0,0) rectangle (1,0.5); 
\draw (0,0.5) rectangle (1,1);
\draw (0,1) rectangle (1,1.5);
\draw (0,1.5) rectangle (1,2);
\draw (0,2) rectangle (1,2.5);
\draw (0,2.5) rectangle (1,3);
\draw[fill=blue!40] (0,3) rectangle (1,3.5);
\node at (0.5,-0.5) {$1$};
\node at (0.5,0.25) {$a+1$};
\node at (0.5,0.75) {$\vdots$};
\node at (0.5,1.25) {$5$};
\node at (0.5,1.75) {$4$};
\node at (0.5,2.25) {$3$};
\node at (0.5,2.75) {$1$};
\draw[|-|] (1.25,3) --(1.25,3.5);
\node at (1.5, 3.25) {$1$};

\draw (2.5,0) rectangle (3.5,3.5); 
\draw (2.5,0) rectangle (3.5,0.5); 
\draw (2.5,0.5) rectangle (3.5,1);
\draw (2.5,1) rectangle (3.5,1.5);
\draw (2.5,1.5) rectangle (3.5,2);
\draw (2.5,2) rectangle (3.5,2.5);
\draw[fill=blue!40] (2.5,2.5) rectangle (3.5,3);
\draw[fill=blue!40] (2.5,3) rectangle (3.5,3.5);
\node at (3,-0.5) {$\overline{1}$};
\node at (3,0.25) {$a+1$};
\node at (3,0.75) {$\vdots$};
\node at (3,1.25) {$5$};
\node at (3,1.75) {$4$};
\node at (3,2.25) {$2$};
\draw[|-|] (3.75,2.5) --(3.75,3.5);
\node at (4, 3) {$2$};

\draw (5,0) rectangle (6,3.5); 
\draw (5,0) rectangle (6,0.5); 
\draw (5,0.5) rectangle (6,1);
\draw (5,1) rectangle (6,1.5);
\draw (5,1.5) rectangle (6,2);
\draw (5,2) rectangle (6,2.5);
\draw[fill=blue!40] (5,2.5) rectangle (6,3);
\draw[fill=blue!40] (5,3) rectangle (6,3.5);
\node at (5.5,-0.5) {$l$};
\node at (5.5,0.25) {$a+1$};
\node at (5.5,0.75) {$\vdots$};
\node at (5.5,1.25) {$l+3$};
\node at (5.5,1.75) {$l+2$};
\node at (5.5,2.25) {$1$};
\draw[|-|] (6.25,2.5) --(6.25,3.5);
\node at (6.5, 3) {$l$};

\draw (7.5,0) rectangle (8.5,3.5); 
\draw (7.5,0) rectangle (8.5,0.5); 
\draw (7.5,0.5) rectangle (8.5,1);
\draw (7.5,1) rectangle (8.5,1.5);
\draw (7.5,1.5) rectangle (8.5,2);
\draw (7.5,2) rectangle (8.5,2.5);
\draw[fill=blue!40] (7.5,2.5) rectangle (8.5,3);
\draw[fill=blue!40] (7.5,3) rectangle (8.5,3.5);
\node at (8,-0.5) {$\overline{l}$};
\node at (8,0.25) {$a+1$};
\node at (8,0.75) {$\vdots$};
\node at (8,1.25) {$l+3$};
\node at (8,1.75) {$l+2$};
\node at (8,2.25) {$l+1$};
\draw[|-|] (8.75,2.5) --(8.75,3.5);
\node at (9, 3) {$l$};

\draw (10,0) rectangle (11,3); 
\draw (10,0) rectangle (11,0.5); 
\draw (10,0.5) rectangle (11,1);
\draw (10,1) rectangle (11,1.5);
\draw (10,1.5) rectangle (11,2);
\draw[fill=blue!40] (10,2) rectangle (11,2.5);
\draw[fill=blue!40] (10,2.5) rectangle (11,3);
\draw[fill=blue!40] (10,3) rectangle (11,3.5);
\node at (10.5,-0.5) {$\overline{\overline{l}}$};
\node at (10.5,0.25) {$a+1$};
\node at (10.5,0.75) {$\vdots$};
\node at (10.5,1.25) {$l+3$};
\node at (10.5,1.75) {$2$};
\draw[|-|] (11.25,2) --(11.25,3.5);
\node at (11.75, 2.75) {$l+1$};

\draw (12.5,0) rectangle (13.5,3); 
\node at (13,0.25) {$1$};
\draw(12.5,0) rectangle (13.5,0.5); 
\draw[fill=blue!40] (12.5,0.5) rectangle (13.5,1);
\draw[fill=blue!40] (12.5,1) rectangle (13.5,1.5);
\draw[fill=blue!40] (12.5,1.5) rectangle (13.5,2);
\draw[fill=blue!40] (12.5,2) rectangle (13.5,2.5);
\draw[fill=blue!40] (12.5,2.5) rectangle (13.5,3);
\draw[fill=blue!40] (12.5,3) rectangle (13.5,3.5);
\node at (13,-0.5) {$a$};

\draw (14.5,0) rectangle (15.5,3); 
\node at (15,0.25) {$a+1$};
\draw (14.5,0) rectangle (15.5,0.5); 
\draw[fill=blue!40] (14.5,0.5) rectangle (15.5,1);
\draw[fill=blue!40] (14.5,1) rectangle (15.5,1.5);
\draw[fill=blue!40] (14.5,1.5) rectangle (15.5,2);
\draw[fill=blue!40] (14.5,2) rectangle (15.5,2.5);
\draw[fill=blue!40] (14.5,2.5) rectangle (15.5,3);
\draw[fill=blue!40] (14.5,3) rectangle (15.5,3.5);
\node at (15,-0.5) {$\overline{a}$};

\draw[fill=blue!40] (16.5,0) rectangle (17.5,3); 
\draw[fill=blue!40] (16.5,0) rectangle (17.5,0.5); 
\draw[fill=blue!40] (16.5,0.5) rectangle (17.5,1);
\draw[fill=blue!40] (16.5,1) rectangle (17.5,1.5);
\draw[fill=blue!40] (16.5,1.5) rectangle (17.5,2);
\draw[fill=blue!40] (16.5,2) rectangle (17.5,2.5);
\draw[fill=blue!40] (16.5,2.5) rectangle (17.5,3);
\draw[fill=blue!40] (16.5,3) rectangle (17.5,3.5);
\node at (17,-0.5) {$a+1$};
\end{tikzpicture}}
\end{center}
for  $l=2,\dots,a-1$.

Let us prove that there are no other kinds of columns of height $a+1$ with shaded boxes:
\begin{itemize}
\item For $l=3,\dots,a+1$, the box \fbox{$l$} cannot appear in any row $s\leq l-1$. Suppose that there is a column of the form
\begin{center}
\scalebox{0.8}{
\begin{tikzpicture}
\draw (1.5,0.5) rectangle (2.5,1);
\draw (1.5,1) rectangle (2.5,1.5);
\draw (1.5,1.5) rectangle (2.5,2);
\draw (1.5,2) rectangle (2.5,2.5);
\draw (1.5,2.5) rectangle (2.5,3);
\draw[fill=blue!40] (1.5,3) rectangle (2.5,3.5);
\draw[|-|] (1.25,0.5) --(1.25,3.5);
\draw[|-|] (2.75,0.5) --(2.75,2);
\node at (0.5,2) {$a+1$};
\node at (2,2.25) {$l$};
\node at (3.75,2.25) {$s^{th}$ Row};
\node at (4.5, 1.5) {$a+1-l$ numbers};
\node at (4.25,1) {$a+1-s$ boxes};
\end{tikzpicture} }
\end{center}
Since $T$ is a \ssyt\ we cannot fill the $a+1-s$ boxes of this column with different numbers, because we only have $a+1-l$ possibilities. 

\item For $l=3,\dots,a$, the box \fbox{$l$} cannot neither appear in any row $s\geq l+1$. We proceed by induction from $a$ to 3.

Look at the end of the reverse reading word
\begin{eqnarray*}
\left(\# a+1\right)_{R1} + \alpha_{a+1} \leq \alpha_a + (\# a)_{R1} + (\# a)_{Ra}.
\end{eqnarray*}
The left--hand side is exactly $k$ and there are $k-\alpha_a$ \fbox{$a$} boxes in total. Then, there are no more boxes than in the first row and in the $a^{th}$ row. 
Let us see that there cannot be \fbox{$l$} boxes in the $(l+1)^{th}$ row, assuming that in any $s^{th}$ row, with $s \geq l+2$, there are only \fbox{$1$} and \fbox{$s$} boxes. The part of the reverse reading word corresponding to the $(l+1)^{th}$ row says that
\begin{eqnarray*}
\left(\# l+1\right)_{R1} + \left(\# l+1\right)_{Rl+1} + \alpha_{l+1} \leq \alpha_l + (\# l)_{R1} + (\# i)_{Rl}.
\end{eqnarray*}
Applying the induction hypothesis, the left--hand side is exactly $k$. Then, $(\# l)_{Rl} + (\# l)_{Rl} \geq k-\alpha_l$. Since there are $k-\alpha_l$ \fbox{$l$} boxes in total, we also have that $(\# l)_{R1} + (\# l)_{Rl} \leq k-\alpha_l$, and there are no \fbox{$l$} boxes in any row different from the first and the $l^{th}$ ones.

\item The boxes \fbox{1} and \fbox{2} can appear in all the rows, since there are $2k+j-\alpha_1$ \fbox{1} boxes and $2k-j-\alpha_2$ \fbox{2} boxes. Both amounts are bigger than $k-\alpha_l$ for all $l=3,\dots a+1$, and there is no contradiction with the condition of the reverse reading word. 
\end{itemize}

To define $\beta$, we denote by $n_l$, with $l\in \mathcal{C}_a\cup\{0\}$, the number of occurrences of the column $l$ in $T$. Then,
\begin{eqnarray*}
\beta := \left( 1^{n_1} \overline{1}^{n_{\overline{1}}} 2^{n_2} \overline{2}^{n_{\overline{2}}} \overline{\overline{2}}^{n_{\overline{\overline{2}}}}\dots \overline{\overline{a-1}}^{n_{\overline{\overline{a-1}}}} a^{n_a}\overline{a}^{n_{\overline{a}}} a+1^{n_{a+1}} \right).
\end{eqnarray*}
We finish the proof showing that the sequence $\beta$ is a coloured partition of $j$. 
We separate into cases due to the $\alpha$--condition:
\begin{itemize}
\item If $\alpha_1=\alpha_2$, there are no \fbox{2} boxes in the first row and $n_1=n_0=0$. Then, counting the \fbox{2} boxes, we get that
\begin{eqnarray}\label{Ch4Eq1}
2k-j-\alpha_2 = \underbrace{k-\alpha_1}_{2^{nd} \text{ row}} + \underbrace{n_{\overline{1}}}_{3^{rd} \text{ row}} + \underbrace{\sum_{l=2}^{a+1} n_{\overline{\overline{2}}}}_{\text{other rows}}.
\end{eqnarray}
Since $\alpha$ is a partition of $k$ and it is defined by the shaded boxes, we have also that
\begin{eqnarray}\label{Ch4Eq2}
k= n_0+n_1+2n_{\overline{1}}+ (a+1)n_{a+1} + \sum_{l=2}^{a} l\cdot n_l + \sum_{l=2}^a l\cdot n_{\overline{l}}+ \sum_{l=2}^{a-1} (l+1)n_{\overline{\overline{l}}}.
\end{eqnarray}
Substituting \eqref{Ch4Eq2} in \eqref{Ch4Eq1}, we have that $j= \sum_{l=1}^{a+1} l\cdot n_l + \sum_{l=2}^a l\cdot n_{\overline{l}}+ \sum_{l=2}^{a-1} l\cdot n_{\overline{\overline{l}}}$.

\item For $\alpha_1 - \alpha_2 >0$, first we show that we cannot have $(\#1)_{R2}=\alpha_1-\alpha_2$. For this, we count the number of \fbox{l} boxes, for $l=1,\dots, a+1$, in the first row:
\begin{eqnarray*}
(\#1)_{R1} &=& 2k+j-\alpha_1 - (\alpha_1-\alpha_2) - \sum_{s=1}^a n_s, \\
(\#2)_{R1} &=& k-j + \alpha_1 - \alpha_2 - n_0 - n_{\overline{1}} - \sum_{s=2}^{a-1} n_{\overline{\overline{s}}} ,\\
(\#3)_{R1} &=& n_2 + n_{\overline{1}} \\
(\#l)_{R1} &=& n_{l-1} + n_{\overline{\overline{l-2}}} \hspace{0.7cm} \text{ for } l=4,\dots, a, \\
(\# a+1)_{R1} &=& n_a + n_{\overline{\overline{a-1}}}.
\end{eqnarray*}
Then, their sum is $3k-\alpha_1 +n_1$, since there are $3k-\alpha_1$ boxes in total in the first row, $n_1=0$ and there are no \fbox{$1$} boxes in the second row. 
Then, by the $\alpha$--condition, $(\#2)_{R1}= \alpha_1-\alpha_2 >0$, and we obtain that
\begin{eqnarray*}
\alpha_1-\alpha_2 = 2k-j-\alpha_2 - ((k-\alpha_1) +n_0) - n_{\overline{1}}- \sum_{l=2}^{a-1} n_{\overline{\overline{l}}},
\end{eqnarray*}
which simplifies as $k-j-n_0-n_{\overline{1}} - \sum_{l=2}^{a-1} n_{\overline{\overline{l}}} =0$.
By \eqref{Ch4Eq1}, we get that 
\begin{eqnarray*}
j= \sum_{l=1}^{a+1} l\cdot n_l + \sum_{l=2}^a l\cdot n_{\overline{l}}+ \sum_{l=2}^{a-1} l\cdot n_{\overline{\overline{l}}}.
\end{eqnarray*} 
\end{itemize}

It is immediate to check that for $b\neq a+1$, there is one tableau or none. 
\end{proof}

\begin{proof}[\textbf{Proof of Theorem \ref{ThmGF}:}]
The function $\mathcal{F}_{a,a}$ is the generating function of the coloured partitions with parts in $\mathcal{B}_a= \left\{ \overline{1},2,\overline{2},\dots, a, \overline{a}, \overline{a+1}\right\}$, for which both parts $i$ and $\overline{i}$ have weight $i$. 

To prove Theorem \ref{ThmGF} we stablish a bijection between coloured partitions with parts in $\mathcal{B}_a$, and corresponding Kronecker tableaux.

As before, the reduced Kronecker coefficients of the case $b=a$ in Family 1 can be written as Kronecker coefficients using Corollary \ref{StabilityBound}:
\begin{eqnarray*}
 \overline{g}^{(k)}_{(k^a), (k^a)}= g^{((a+2)k,k)}_{(3k,k^a), (3k,k^a)}.
\end{eqnarray*}
Then, using the result of  R. Orellana and C. Ballantine, Theorem \ref{ThmKT}, we obtain a combinatorial description of these coefficients in terms of Kronecker tableaux: $ \overline{g}^{(k)}_{(k^a), (k^a)}$ equals the number of Kronecker tableaux with type $(3k,k^a)/\alpha$ and shape $(3k,k^a)/\alpha$, where $\alpha$ is a partition of $k$ with $\ell(\alpha)\leq a+1$. 
We stablish a bijection between coloured partitions with parts in $\mathcal{B}_a$, and Kronecker tableaux  with shape $(3k,k^a)/\alpha$ and type $(3k,k^a)/\alpha$, where $\alpha$ is a partition of $k$  with $\ell(\alpha)\leq a+1$. 
The bijection is defined by the following algorithm. 
To a  coloured  partition  of $k$ with parts in $\mathcal{B}_a$, $\beta$,  we associate a Kronecker tableau $T(\beta)$ as follows.  

First, we identify each element of $\mathcal{B}_a$ to a column of height $a+1$:  
\begin{center}
\scalebox{0.8}{
\begin{tikzpicture}
\draw (2.5,0.5) rectangle (3.5,3.5); 
\draw (2.5,0.5) rectangle (3.5,1);
\draw (2.5,1) rectangle (3.5,1.5);
\draw (2.5,1.5) rectangle (3.5,2);
\draw (2.5,2) rectangle (3.5,2.5);
\draw (2.5,2.5) rectangle (3.5,3);
\draw[fill=blue!40] (2.5,3) rectangle (3.5,3.5);
\node at (3,0) {$\overline{1}$};
\node at (3,0.8) {$a+1$};
\node at (3,1.25) {$\vdots$};
\node at (3,1.75) {$4$};
\node at (3,2.25) {$3$};
\node at (3,2.75) {$1$};
\draw[|-|] (3.75,3) --(3.75,3.5);
\node at (4, 3.25) {$1$};

\draw (5,0.5) rectangle (6,3.5); 
\draw (5,0.5) rectangle (6,1);
\draw (5,1) rectangle (6,1.5);
\draw (5,1.5) rectangle (6,2);
\draw (5,2) rectangle (6,2.5);
\draw[fill=blue!40] (5,2.5) rectangle (6,3);
\draw[fill=blue!40] (5,3) rectangle (6,3.5);
\node at (5.5,0) {$\overline{i}$};
\node at (5.5,0.8) {$a+1$};
\node at (5.5,1.25) {$\vdots$};
\node at (5.5,1.75) {$i+2$};
\node at (5.5,2.25) {$1$};
\draw[|-|] (6.25,2.5) --(6.25,3.5);
\node at (6.5, 3) {$i$};

\draw (7.5,0.5) rectangle (8.5,3.5);  
\draw (7.5,0.5) rectangle  (8.5,1);
\draw (7.5,1) rectangle (8.5,1.5);
\draw (7.5,1.5) rectangle (8.5,2);
\draw (7.5,2) rectangle (8.5,2.5);
\draw[fill=blue!40] (7.5,2.5) rectangle (8.5,3);
\draw[fill=blue!40] (7.5,3) rectangle (8.5,3.5);
\node at (8,0) {$i$};
\node at (8,0.8) {$a+1$};
\node at (8,1.25) {$\vdots$};
\node at (8,1.75) {$i+2$};
\node at (8,2.25) {$i+1$};
\draw[|-|] (8.75,2.5) --(8.75,3.5);
\node at (9, 3) {$i$};

\draw[fill=blue!40] (10,0.5) rectangle (11,3.5); 
\draw[fill=blue!40] (10,0.5) rectangle (11,1);
\draw[fill=blue!40] (10,1) rectangle (11,1.5);
\draw[fill=blue!40] (10,1.5) rectangle (11,2);
\draw[fill=blue!40] (10,2) rectangle (11,2.5);
\draw[fill=blue!40] (10,2.5) rectangle (11,3);
\draw[fill=blue!40] (10,3) rectangle (11,3.5);
\node at (10.5,0) {$\overline{a+1}$};
\draw[|-|] (11.25,0.5) --(11.25,3.5);
\node at (11.75, 1.75) {$a+1$};
\end{tikzpicture}}
\end{center}
for $i \in \{2,3,\dots,a-1,a\}$.

Note that it is always possible to order the columns corresponding to the parts of $\beta$ in such a way that we obtain a \ssyt. 
We denote by $m_i$ the number of times that the part $i\in \mathcal{B}_a$ appears in $\beta$. Then, the column $i$ appears $m_i$ times in the \ssyt\ that we are building.
We read the partition  $\alpha$ from our \ssyt\ by counting the number of shaded boxes in each row: $\alpha_{a+1}= m_{\overline{a+1}}$, $\alpha_i = \alpha_{i+1} +m_i + m_{\overline{i}}$ for $i=2,\dots, a$, and $\alpha_1= \alpha_2 +m_{\overline{1}}$.

These columns correspond to the first columns on the left--hand side of $T(\beta)$. We build the rest of $T(\beta)$ of shape $(3k,k^a)/\alpha$ as follows: complete $i^{th}$ row with \fbox{$i$} boxes, for $i=2,\dots,a+1$, and complete first row with the remaining numbers of the type $(3k,k^a) / \alpha$ in weakly increasing order from left to right. 

For instance, the Kronecker tableau corresponding to $\alpha=(2,1)$ and $\lambda=\nu=(9,3,3,3)$ obtained by our algorithm taking $a=3$ and $\beta=(2,\overline{1})$, is 
\begin{center}
\scalebox{0.8}{
\begin{tikzpicture}
\draw (0,0.5) rectangle (1.5,2.5);
\draw (0,1) --(1.5,1);
\draw (0,1.5) --(1.5,1.5);
\draw (0,2) --(4.5,2);
\draw (0,2.5) --(4.5,2.5);
\draw (0.5,0.5) --(0.5,2.5);
\draw (1,0.5) --(1,2.5);
\draw (1.5,0.5) --(1.5,2.5);
\draw (2,2)--(2,2.5);
\draw (2.5,2)--(2.5,2.5);
\draw (3,2)--(3,2.5);
\draw (3.5,2)--(3.5,2.5);
\draw (4,2)--(4,2.5);
\draw (4.5,2)--(4.5,2.5);

\draw[fill=blue!40] (0,1.5) rectangle (0.5,2);
\draw[fill=blue!40] (0,2) rectangle (0.5,2.5);
\draw[fill=blue!40] (0.5,2) rectangle (1,2.5);

\node at (0.25,0.75) {4};
\node at (0.25,1.25) {3};
\node at (0.75,0.75) {4};
\node at (0.75,1.25) {3};
\node at (0.75,1.75) {1};
\node at (1.25,0.75) {4};
\node at (1.25,1.25) {3};
\node at (1.25,1.75) {2};
\node at (1.25,2.25) {1};
\node at (1.75,2.25) {1};
\node at (2.25,2.25) {1};
\node at (2.75,2.25) {1};
\node at (3.25,2.25) {1};
\node at (3.75,2.25) {2};
\node at (4.25,2.25) {3};
\end{tikzpicture}}
\end{center} 

To the check that the algorithm describes a bijection between the set of Kronecker tableaux and the set of coloured partitions we proceed as in the proof of Theorem \ref{ThmFam3}. Details can be found in \cite{Col16}.

\vspace{1cm}

For $b=a$ in Family 2, the reduced Kronecker coefficients are described as Kronecker coefficients as follows:
\begin{eqnarray*}
\overline{g}_{(k^a),\left((k+i)^a\right)}^{(k)} = g_{(n-ak,k^a), \left( n-a(k+i), (k+i)^a\right)}^{(n-k,k)},
\end{eqnarray*}
with $n\geq N$, for some $N$. 
We need to use different bounds for $N$, depending on whether the values of $k$ and $i$, in order to have that the sequences indexing the Kronecker coefficients are partitions. Otherwise, we could not apply Theorem \ref{ThmKT} to obtain a combinatorial interpretation.

For $k< \frac{(a+1)i}{2}$, we take $N=(a+3)k+(a+1)i$. Then, applying Theorem \ref{ThmKT}, we interpret them in terms of Kronecker tableaux: $\overline{g}^{(k)}_{\left( (k+i)^a \right) (k^b)}$ equals the number of Kronecker tableaux of shape $(3k+i,(k+i)^a)/\alpha$ and type $(3k+(a+1)i, k^a)/\alpha$, with $\alpha$ a partition of $k$ with $\ell(\alpha)\leq a+1$. These Kronecker tableaux have the following form
\begin{center}
\scalebox{0.8}{
\begin{tikzpicture}
\draw (0,0) rectangle (4,0.5);
\draw (0,0.5) rectangle (4,1);
\draw (0,1) rectangle (4,1.5);
\draw (3,0) --(3,2);
\draw (4,1.5) --(4,2);
\draw (3,1.5) rectangle (7,2);
\draw (7,1.5) rectangle (10,2);
\draw[fill=blue!40] (0,0) --(0.75,0) --(0.75,0.5) --(1,0.5) --(1,1) --(1.5,1.5) --(1.5,2) --(0,2) --(0,0);
\draw (0,0) rectangle (3,2);

\draw[|-|] (0,-0.25) --(0.75,-0.25);
\draw[|-|] (3,2.25) --(4,2.25);
\draw[|-|] (0,2.25) --(3,2.25);
\draw[|-|] (4,2.25) --(7,2.25);
\draw[|-|] (7,2.25) --(10,2.25);

\node at (0.375,-0.65) {$\alpha_{a+1}$};
\node at (3.5,2.5) {$i$};
\node at (1.5,2.5) {$k$};
\node at (5.5,2.5) {$k$};
\node at (8.5,2.5) {$k$};
\node at (3.5,0.25) {$a+1$};
\node at (3.5,0.75) {$a$};
\node at (3.5,1.25) {$\vdots$};
\node at (3.5,1.75) {$1$};
\end{tikzpicture}}
\end{center} 
As we observe in the \ssyt\ drawn above, there are always $k+i-\alpha_1$ columns of height $a+1$ that are filled with $j'$s in the $j^{th}$ row, for $j=1,\cdots,a+1$. We only have $(\alpha_1-\alpha_j-i)$ remaining $j'$s numbers to put on the \ssyt. If $i> \alpha_1-\alpha_j$, then we do not have enough numbers to fill those $k+i- \alpha_1$ columns of height $a+1$ and no shaded boxes. 

Suppose $i\leq \alpha_1-\alpha_j$, for all $j=1,\dots, a+1$. The condition over the reverse reading word implies that for $j\neq 1$, the \fbox{$j$} box only appears in the first and in the $j^{th}$ row. This implies that in the $j^{th}$ row there are only \fbox{1} and \fbox{$j$} boxes. For the \fbox{1} boxes, there are at least $k-\alpha_j-(\alpha_1-i-\alpha_j)$ of them in the $j^{th}$ row. Then, for $j=1,\dots, a+1$, $(\# 1)_{Rj} \geq k+i-\alpha_1$.

In order to avoid the columns 
\scalebox{0.7}{
\begin{tikzpicture}
\draw (0,0) rectangle (0.5,0.5);
\draw (0,0.5) rectangle (0.5,1);
\node at (0.25,0.25) {1};
\node at (0.25,0.75) {1};
\end{tikzpicture}}, $\alpha_j-\alpha_{j+1} \geq k+i-\alpha_1$. This implies that $\alpha_j - \alpha_{j+1} \geq 1$, and that $k=|\alpha| \geq a\cdot i$, which is a contradiction because we are in the case $k< \frac{(a+1)i}{2}$.
We conclude that for $k< \frac{(a+1)i}{2}$, the reduced Kronecker coefficient is zero. This explains the first zeros of the sequence of reduced Kronecker coefficients, once we fix $i$. The rest of the initial zeros are explained in the other case. 

For $k\geq \frac{(a+1)i}{2}$, we take $N=(a+3)k$. Then, applying Theorem \ref{ThmKT}, we interpret them in terms of Kronecker tableaux: $\overline{g}^{(k)}_{\left( (k+i)^a \right) (k^b)}$ equals the number of Kronecker tableaux of shape $(3k,k^a)/\alpha$ and type $(3k-ai, (k+i)^a)/\alpha$, with $\alpha$ a partition of $k$ with $\ell(\alpha)\leq a+1$. Let us see a general idea about how are these Kronecker tableaux.
First, we observe that for $j\in \{2,\dots, a+1\}$ there should be $k+i-\alpha_j$ \fbox{j} boxes in total and in the $j^{th}$th row there are only $k-\alpha_j$ boxes. Using the condition of the reverse reading word, we realize that for all $j\in \{1,\dots, a\}$, $\alpha_{j} - \alpha_{j+1} \geq i$. Then, we can fill $i$ boxes in the first row with \fbox{$j$}. In particular, this implies that $a \leq \ell(\alpha) \leq a+1$. 
Setting these conditions over all partitions, for $k < \frac{a(a+1)i}{2}=d\cdot i$ there is no valid partition. The first valid positive integer is $k=d\cdot i$ with the partition $\alpha=\left(ai, (a-1)i,\dots, 2a, a\right)$. That is why we have the rest of initial zeros. 

Let $k\geq d\cdot i$. First, consider the translation $k \longmapsto k + d\cdot i$, that makes the initial zeros disappear. Therefore, we consider the reduced Kronecker coefficients
\begin{eqnarray*}
\left\{ \overline{g}^{(k+d\cdot i)}_{\left( (k + (d+1)i)^a \right), \left((k+d\cdot i)^a\right)} \right\}_{k,i\geq 0}.
\end{eqnarray*}
One more time, we express them in terms of Kronecker coefficients using the Murnaghan's theorem and taking $N=(a+3)d + (a+1)di$. Applying Theorem \ref{ThmKT}, our reduced Kronecker coefficients count the number of Kronecker tableaux of shape $\lambda/\alpha= \left(3k+d\cdot i, (k+d\cdot i)^a \right)/\alpha$, type $\nu/\alpha=\left( 3k+(d-a)i, (k+(d+1)i)^a \right)/\alpha$ and associated partition $\alpha \vdash k+d\cdot i$, with $\ell(\alpha) \leq a+1$. 

To prove that their generating function is $\mathcal{F}_{a,a}$ we stablish a bijection between coloured partitions of $k$ with parts in $\mathcal{B}_a$, and Kronecker tableaux of shape $\lambda/\alpha$, type $\nu/\alpha$ and associated partition $\alpha \vdash k+d\cdot i$, with $\ell(\alpha) \leq a+1$. 

The bijection is defined by the following algorithm.
To a coloured partition of $k$ with parts in $\mathcal{B}_a$, we associate a Kronecker tableau $T(\beta)$ as follows. 
First, we identify each element of $\mathcal{B}_a$ to a column of height $a+1$. It is the same identification than for Family 1: 
\begin{center}
\scalebox{0.8}{
\begin{tikzpicture}
\draw (2.5,0.5) rectangle (3.5,3.5); 
\draw (2.5,0.5) rectangle (3.5,1);
\draw (2.5,1) rectangle (3.5,1.5);
\draw (2.5,1.5) rectangle (3.5,2);
\draw (2.5,2) rectangle (3.5,2.5);
\draw (2.5,2.5) rectangle (3.5,3);
\draw[fill=blue!40] (2.5,3) rectangle (3.5,3.5);
\node at (3,0) {$\overline{1}$};
\node at (3,0.8) {$a+1$};
\node at (3,1.25) {$\vdots$};
\node at (3,1.75) {$4$};
\node at (3,2.25) {$3$};
\node at (3,2.75) {$1$};
\draw[|-|] (3.75,3) --(3.75,3.5);
\node at (4, 3.25) {$1$};

\draw (5,0.5) rectangle (6,3.5); 
\draw (5,0.5) rectangle (6,1);
\draw (5,1) rectangle (6,1.5);
\draw (5,1.5) rectangle (6,2);
\draw (5,2) rectangle (6,2.5);
\draw[fill=blue!40] (5,2.5) rectangle (6,3);
\draw[fill=blue!40] (5,3) rectangle (6,3.5);
\node at (5.5,0) {$\overline{j}$};
\node at (5.5,0.8) {$a+1$};
\node at (5.5,1.25) {$\vdots$};
\node at (5.5,1.75) {$j+2$};
\node at (5.5,2.25) {$1$};
\draw[|-|] (6.25,2.5) --(6.25,3.5);
\node at (6.5, 3) {$j$};

\draw (7.5,0.5) rectangle (8.5,3.5);  
\draw (7.5,0.5) rectangle  (8.5,1);
\draw (7.5,1) rectangle (8.5,1.5);
\draw (7.5,1.5) rectangle (8.5,2);
\draw (7.5,2) rectangle (8.5,2.5);
\draw[fill=blue!40] (7.5,2.5) rectangle (8.5,3);
\draw[fill=blue!40] (7.5,3) rectangle (8.5,3.5);
\node at (8,0) {$j$};
\node at (8,0.8) {$a+1$};
\node at (8,1.25) {$\vdots$};
\node at (8,1.75) {$j+2$};
\node at (8,2.25) {$j+1$};
\draw[|-|] (8.75,2.5) --(8.75,3.5);
\node at (9, 3) {$j$};

\draw[fill=blue!40] (10,0.5) rectangle (11,3.5); 
\draw[fill=blue!40] (10,0.5) rectangle (11,1);
\draw[fill=blue!40] (10,1) rectangle (11,1.5);
\draw[fill=blue!40] (10,1.5) rectangle (11,2);
\draw[fill=blue!40] (10,2) rectangle (11,2.5);
\draw[fill=blue!40] (10,2.5) rectangle (11,3);
\draw[fill=blue!40] (10,3) rectangle (11,3.5);
\node at (10.5,0) {$\overline{a+1}$};
\draw[|-|] (11.25,0.5) --(11.25,3.5);
\node at (11.75, 1.75) {$a+1$};
\end{tikzpicture}}
\end{center}
for $j\in \{2,3,\dots,a-1,a\}$.
If we write $\beta$ as $\left(\overline{1}^{m_{\overline{1}}}\overline{2}^{m_{\overline{2}}} 2^{m_2} \dots \overline{a+1}^{m_{\overline{a+1}}}\right)$, then $m_i$ will denote the number of times that the column $i$ appears in the \ssyt\ that we are building. 

We continue adding the following columns, $i$ times each one: 
\begin{center}
\scalebox{0.8}{
\begin{tikzpicture}
\draw (0,0) rectangle (1,3.5); 
\draw (0,0) rectangle (1,0.5); 
\draw (0,0.5) rectangle (1,1);
\draw (0,1) rectangle (1,1.5);
\draw (0,1.5) rectangle (1,2);
\draw (0,2) rectangle (1,2.5);
\draw (0,2.5) rectangle (1,3);
\draw[fill=blue!40] (0,3) rectangle (1,3.5);
\node at (0.5,0.25) {$a+1$};
\node at (0.5,0.75) {$\vdots$};
\node at (0.5,1.25) {$5$};
\node at (0.5,1.75) {$4$};
\node at (0.5,2.25) {$3$};
\node at (0.5,2.75) {$2$};
\draw[|-|] (1.25,0) --(1.25,3);
\node at (1.5, 1.5) {$a$};
\draw[|-|] (1.25,3) --(1.25,3.5);
\node at (1.5, 3.25) {$1$};

\node at (2.25, 1.75) {$\dots$};

\draw (3,0) rectangle (4,3.5); 
\draw (3,0.5) --(4,0.5);
\draw (3,1) rectangle (4,1.5);
\draw (3,1.5) rectangle (4,2);
\draw (3,2) rectangle (4,2.5);
\draw[fill=blue!40] (3,2.5) rectangle (4,3);
\draw[fill=blue!40] (3,3) rectangle (4,3.5);
\node at (3.5,0.25) {$a+1$};
\node at (3.5,0.75) {$\vdots$};
\node at (3.5,1.25) {$j+2$};
\node at (3.5,1.75) {$j+1$};
\node at (3.5,2.25) {$j$};
\draw[|-|] (4.25,0) --(4.25,2.5);
\node at (5, 1.5) {$a+1-j$};
\draw[|-|] (4.25,2.5) --(4.25,3.5);
\node at (4.75, 3) {$j-1$};

\node at (6.5, 1.75) {$\dots$};

\draw (7.25,0) rectangle (8.25,3); 
\node at (7.75,0.25) {$a+1$};
\draw (7.25,0) rectangle (8.25,0.5); 
\draw[fill=blue!40] (7.25,0.5) rectangle (8.25,1);
\draw[fill=blue!40] (7.25,1) rectangle (8.25,1.5);
\draw[fill=blue!40] (7.25,1.5) rectangle (8.25,2);
\draw[fill=blue!40] (7.25,2) rectangle (8.25,2.5);
\draw[fill=blue!40] (7.25,2.5) rectangle (8.25,3);
\draw[fill=blue!40] (7.25,3) rectangle (8.25,3.5);
\draw[|-|] (8.5,0.5) --(8.5,3.5);
\node at (8.75, 2) {$a$};
\end{tikzpicture}}
\end{center}
for $j \in \{3,\dots,a\}$.
Note that it is always possible to order the columns corresponding to the parts of $\beta$ and these last extra columns in such a way that we obtain a \ssyt. 

We read the partition $\alpha$ from our \ssyt  by counting the number of shaded boxes in each row: $\alpha_{a+1} = m_{\overline{a+1}}$, $\alpha_{j} = \alpha_{j+1} + m_j + m_{\overline{j}} + i$, for $j=2,\dots,a$, and $\alpha_1 = \alpha_2 + m_{\overline{1}} + i$. 
These columns are the left columns of $T(\beta)$. We build the rest of $T(\beta)$ as follows: complete $j^{th}$ row with \fbox{$j$} boxes, for $j=2,\dots, a+1$, and complete the first row with the remaining numbers of the already known type of the tableaux, $\left( 3k+(3n-a)i, (k+(d+1)i)^a \right)/\alpha$, in weakly increasing order from left to right. 

For instance, take $a=3$, $k=6$ and $i=1$ and consider $\beta = (\overline{4}, \overline{2}) \vdash 6$. Applying the algorithm defined above, we obtain the following Kronecker tableau of shape $(36,12,12,12)/\alpha$ and type $(33,13,13,13)/\alpha$, with $\alpha=(5,4,2,1) \vdash 12$:
\begin{center}
\scalebox{0.65}{
\begin{tikzpicture}
\draw[fill=blue!40] (0,0) --(0.5,0) --(0.5,0.5) --(1,0.5) --(1,1) --(2,1) --(2,1.5) --(2.5,1.5) --(2.5,2) --(0,2) --(0,0);
\draw (0,0) --(6,0);
\draw (0,0.5) --(6,0.5);
\draw (0,1) --(6,1);
\draw (0,1.5) --(18,1.5);
\draw (0,2) --(18,2);
\draw (0.5,0) --(0.5,2);
\draw (1,0) --(1,2);
\draw (1.5,0) --(1.5,2);
\draw (2,0) --(2,2);
\draw (2.5,0) --(2.5,2);
\draw (3,0) --(3,2);
\draw (3.5,0) --(3.5,2);
\draw (4,0) --(4,2);
\draw (4.5,0) --(4.5,2);
\draw (5,0) --(5,2);
\draw (5.5,0) --(5.5,2);
\draw (6,0) --(6,2);
\draw (6.5,1.5) --(6.5,2);
\draw (7,1.5) --(7,2);
\draw (7.5,1.5) --(7.5,2);
\draw (8,1.5) --(8,2);
\draw (8.5,1.5) --(8.5,2);
\draw (9,1.5) --(9,2);
\draw (9.5,1.5) --(9.5,2);
\draw (10,1.5) --(10,2);
\draw (10.5,1.5) --(10.5,2);
\draw (11,1.5) --(11,2);
\draw (11.5,1.5) --(11.5,2);
\draw (12,1.5) --(12,2);
\draw (12.5,1.5) --(12.5,2);
\draw (13,1.5) --(13,2);
\draw (13.5,1.5) --(13.5,2);
\draw (14,1.5) --(14,2);
\draw (14.5,1.5) --(14.5,2);
\draw (15,1.5) --(15,2);
\draw (15.5,1.5) --(15.5,2);
\draw (16,1.5) --(16,2);
\draw (16.5,1.5) --(16.5,2);
\draw (17,1.5) --(17,2);
\draw (17.5,1.5) --(17.5,2);
\draw (18,1.5) --(18,2);

\node at (0.75,0.25) {4};
\node at (1.25,0.25) {4};
\node at (1.75,0.25) {4};
\node at (2.25,0.25) {4};
\node at (2.75,0.25) {4};
\node at (3.25,0.25) {4};
\node at (3.75,0.25) {4};
\node at (4.25,0.25) {4};
\node at (4.75,0.25) {4};
\node at (5.25,0.25) {4};
\node at (5.75,0.25) {4};
\node at (17.75,1.75) {4};

\node at (1.25,0.75) {1};
\node at (1.75,0.75) {3};
\node at (2.25,0.75) {3};
\node at (2.75,0.75) {3};
\node at (3.25,0.75) {3};
\node at (3.75,0.75) {3};
\node at (4.25,0.75) {3};
\node at (4.75,0.75) {3};
\node at (5.25,0.75) {3};
\node at (5.75,0.75) {3};
\node at (17.25,1.75) {3};

\node at (2.25,1.25) {2};
\node at (2.75,1.25) {2};
\node at (3.25,1.25) {2};
\node at (3.75,1.25) {2};
\node at (4.25,1.25) {2};
\node at (4.75,1.25) {2};
\node at (5.25,1.25) {2};
\node at (5.75,1.25) {2};
\node at (16.75,1.75) {2};
\node at (16.25,1.75) {1};

\node at (2.75,1.75) {1};
\node at (3.25,1.75) {1};
\node at (3.75,1.75) {1};
\node at (4.25,1.75) {1};
\node at (4.75,1.75) {1};
\node at (5.25,1.75) {1};
\node at (5.75,1.75) {1};
\node at (6.25,1.75) {1};
\node at (6.75,1.75) {1};
\node at (7.25,1.75) {1};
\node at (7.75,1.75) {1};
\node at (8.25,1.75) {1};
\node at (8.75,1.75) {1};
\node at (9.25,1.75) {1};
\node at (9.75,1.75) {1};
\node at (10.25,1.75) {1};
\node at (10.75,1.75) {1};
\node at (11.25,1.75) {1};
\node at (11.75,1.75) {1};
\node at (12.25,1.75) {1};
\node at (12.75,1.75) {1};
\node at (13.25,1.75) {1};
\node at (13.75,1.75) {1};
\node at (14.25,1.75) {1};
\node at (14.75,1.75) {1};
\node at (15.25,1.75) {1};
\node at (15.75,1.75) {1};
\end{tikzpicture}}
\end{center}
The map defined by this algorithm is a bijection, proceeding as in the proof of Theorem \ref{ThmFam3}. Complete details can be found in \cite{Col16}.

\end{proof}

\section{Plane Partitions and Reduced Kronecker Coefficients}\label{PP}

In this section we establish a link between our families of reduced Kronecker coefficients and  plane partitions. 

 A \emph{plane partition} is a finite subset $\mathcal{P}$ of positive integer lattice points, $\{(i,j,k)\} \subset \mathbb{N}^3$, such that if $(r,s,t)$ lies in $\mathcal{P}$ and if $(i,j,k)$ satisfies $1\leq i \leq r$, $1\leq j \leq s$ and $1\leq k\leq t$,  then $(i,j,k)$ also lies in $\mathcal{P}$. 
 Let $\mathcal{B}(r,s,t)$ be the set of plane partitions fitting in a $r\times s$ rectangle, and with biggest part equal to or less than $t$. For instance, the following is a plane partition of 28 in $\mathcal{B}_{4,3,5}$
\begin{center}  
  {\scalebox{0.8}{
    \begin{tikzpicture}[x=(220:0.4cm), y=(-40:0.4cm), z=(90:0.4242cm)]
\foreach \m [count=\y] in {{5,5,3,2},{5,4,2},{1,1}}{
  \foreach \n [count=\x] in \m {
  \ifnum \n>0
      \foreach \z in {1,...,\n}{
        \draw [fill=blue!30] (\x+1,\y,\z) -- (\x+1,\y+1,\z) -- (\x+1, \y+1, \z-1) -- (\x+1, \y, \z-1) -- cycle;
        \draw [fill=blue!40] (\x,\y+1,\z) -- (\x+1,\y+1,\z) -- (\x+1, \y+1, \z-1) -- (\x, \y+1, \z-1) -- cycle;
        \draw [fill=blue!10] (\x,\y,\z)   -- (\x+1,\y,\z)   -- (\x+1, \y+1, \z)   -- (\x, \y+1, \z) -- cycle;  
      }
      \node at (\x+0.5, \y+0.5, \n) {\n};
 \fi
 }
}
\end{tikzpicture}}}
\end{center}

MacMahon obtained in 1920 the generating function for the plane partitions fitting in a rectangle. 
\begin{theorem}[P. MacMahon, \cite{MR2417935}]\label{ThmPPIni}
 The generating function for plane partitions in $\mathcal{B}(r,s,t)$ is given by
$$
  pp_t(x;r,s)= \prod_{i=1}^r \prod_{j=1}^s \frac{1-x^{i+j+t-1}}{1-x^{i+j-1}}.
$$
\end{theorem}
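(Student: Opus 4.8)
The plan is to identify the volume generating function $pp_t(x;r,s)$ with a principal specialization of a Schur polynomial indexed by a rectangle, and then to evaluate that specialization by the hook--content formula. The payoff is that the two products in MacMahon's formula are exactly the content product and the hook product of the rectangular shape $(s^r)$, so once the dictionary is set up the identity essentially reads off.

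First I would set up a weight--preserving bijection between the plane partitions counted by $pp_t(x;r,s)$ and the \ssytx\ of rectangular shape $(s^r)$ with entries in $\{1,\dots,r+t\}$. Given such a tableau $T=(T_{ij})$, the assignment $\pi_{ij}=T_{ij}-i$ produces an array with entries in $\{0,\dots,t\}$: the strict increase down the columns of $T$ forces $T_{ij}\geq i$, hence $\pi_{ij}\geq 0$, while the $r-i$ strictly larger entries below the box $(i,j)$ force $T_{ij}\leq r+t-(r-i)=t+i$, hence $\pi_{ij}\leq t$. After the trivial flip $(i,j)\mapsto(r+1-i,s+1-j)$ this array is weakly decreasing along rows and columns, that is, a plane partition in $\mathcal{B}(r,s,t)$, and the construction is clearly reversible. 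Tracking the statistics, the entry sum satisfies $\sum_{i,j}T_{ij}=|\pi|+s\,\tfrac{r(r+1)}{2}$, so the naive specialization will pick up a single monomial factor that I must carry along.

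Next I would recall that $s_{(s^r)}(1,x,\dots,x^{r+t-1})=\sum_T x^{\,|T|-rs}$, where the sum is over the same tableaux and $|T|$ is the entry sum; combined with the previous count this yields $s_{(s^r)}(1,x,\dots,x^{r+t-1})=x^{\,s\binom{r}{2}}\,pp_t(x;r,s)$. To finish, I would apply the hook--content formula $s_\lambda(1,x,\dots,x^{N-1})=x^{\,n(\lambda)}\prod_{u\in\lambda}\frac{1-x^{N+c(u)}}{1-x^{h(u)}}$ with $\lambda=(s^r)$ and $N=r+t$, using $n((s^r))=s\binom{r}{2}$, hook lengths $h(i,j)=(r-i)+(s-j)+1$, and contents $c(i,j)=j-i$. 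Reindexing the hook product by $a=i-1,\ b=j-1$ turns the denominator into $\prod_{i=1}^r\prod_{j=1}^s(1-x^{i+j-1})$, while substituting $i\mapsto r+1-i$ in the content product turns the numerator into $\prod_{i=1}^r\prod_{j=1}^s(1-x^{i+j+t-1})$. The prefactor $x^{\,n((s^r))}=x^{\,s\binom{r}{2}}$ cancels exactly the shift coming from the bijection, and MacMahon's formula drops out.

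The routine parts are the two reindexings of the products and the bookkeeping of the shift $s\binom{r}{2}$; the one genuine input I am leaning on is the hook--content formula, so the main obstacle is really to have that identity in hand (it can itself be derived from the bialternant formula for $s_\lambda$ and a Vandermonde cancellation). If one prefers a self--contained argument, the alternative is to encode the plane partitions in $\mathcal{B}(r,s,t)$ as a family of $r$ non--intersecting lattice paths in a grid of size governed by $s$ and $t$, express the weighted count by the Lindström--Gessel--Viennot lemma as a determinant of Gaussian (that is, $x$--analogue) binomial coefficients, and evaluate that determinant. In that route the determinant evaluation is the delicate step, and it reproduces the same product.
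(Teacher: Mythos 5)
The paper offers no proof of this statement: it is quoted from MacMahon's book and used as a black box (only the reformulation in Corollary \ref{ThmPPfg} is derived from it), so there is no internal argument to compare yours against. That said, your proposal is a correct and standard derivation of the box formula. The bijection $\pi_{ij}=T_{ij}-i$ between semi--standard Young tableaux of shape $(s^r)$ with entries in $\{1,\dots,r+t\}$ and plane partitions in $\mathcal{B}(r,s,t)$ is weight--preserving up to the shift $s\binom{r}{2}$, and this shift is exactly $n((s^r))$, so it cancels the monomial prefactor in the hook--content formula; the multiset of hooks of the rectangle is $\{(r-i)+(s-j)+1\}=\{i+j-1\}$ and the multiset of shifted contents is $\{r+t+j-i\}=\{i+j+t-1\}$, which gives precisely MacMahon's product. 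The one external ingredient is the hook--content formula (equivalently, the bialternant evaluation of the principal specialization), which you correctly identify as the load-bearing step; your alternative route via non--intersecting lattice paths and the Lindstr\"om--Gessel--Viennot determinant is likewise a standard self--contained proof. Either route would serve as a legitimate proof of the theorem as stated.
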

We can reformulate MacMahon's result in the following way.
 \begin{corollary}\label{ThmPPfg}
Let $l=\min(r,s)$ and $n=\max(r,s)$. Then, the generating function for the plane partitions fitting inside an $r\times s$ rectangle is
$$
 \prod_{j=l}^{n} \left( \frac{1}{1-x^j} \right)^{l} \cdot \prod_{i=1}^{l-1}\left(\frac{1}{1-x^i}\right)^i \left( \frac{1}{1-x^{n+ i}}\right)^{l-i}.
$$
  \end{corollary}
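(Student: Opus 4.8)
The plan is to derive the corollary from Theorem \ref{ThmPPIni} by first letting the bound on the size of the parts go to infinity and then regrouping the resulting infinite product according to the value of the exponent. Plane partitions fitting inside an $r\times s$ rectangle with no restriction on their parts are exactly the plane partitions in $\mathcal{B}(r,s,t)$ for $t$ arbitrarily large, so their generating function is the coefficient-wise limit $\lim_{t\to\infty} pp_t(x;r,s)$. In MacMahon's product each factor satisfies $\frac{1-x^{i+j+t-1}}{1-x^{i+j-1}}\to \frac{1}{1-x^{i+j-1}}$ as $t\to\infty$ as formal power series, since $x^{i+j+t-1}$ contributes nothing below degree $t$. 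Hence
\begin{eqnarray*}
\sum_{\mathcal{P}} x^{|\mathcal{P}|} = \prod_{i=1}^r\prod_{j=1}^s \frac{1}{1-x^{i+j-1}},
\end{eqnarray*}
the sum ranging over all plane partitions $\mathcal{P}$ inside the $r\times s$ rectangle.

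The heart of the argument is then a purely combinatorial regrouping of this double product. Because the product is visibly symmetric under exchanging $r$ and $s$, I may assume $r=l\le s=n$. I would collect the factors sharing a common exponent: for each integer $m$ with $1\le m\le l+n-1$, the factor $(1-x^m)^{-1}$ occurs once for every pair $(i,j)$ with $1\le i\le l$, $1\le j\le n$ and $i+j-1=m$, so its multiplicity is
\begin{eqnarray*}
c_m = \min(l,m)-\max(1,m+1-n)+1.
\end{eqnarray*}
A short case analysis on the position of $m$ relative to $l$ and $n$ then yields three regimes: $c_m=m$ for $1\le m\le l$ (the exponent ramps up), $c_m=l$ for $l\le m\le n$ (the plateau), and $c_m=l+n-m$ for $n\le m\le l+n-1$ (the exponent ramps down), with the boundary values at $m=l$ and $m=n$ agreeing between adjacent regimes.

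Finally I would read off that these multiplicities are precisely the ones displayed on the right-hand side of the statement: the plateau $l\le m\le n$ gives $\prod_{j=l}^{n}(1-x^j)^{-l}$; the ramp-up range $1\le m\le l-1$ gives $\prod_{i=1}^{l-1}(1-x^i)^{-i}$; and substituting $m=n+i$ in the ramp-down range $n+1\le m\le n+l-1$ turns $c_m=l+n-m$ into the exponent $l-i$, producing $\prod_{i=1}^{l-1}(1-x^{n+i})^{-(l-i)}$. Matching these three pieces completes the proof. The only genuinely delicate point is the bookkeeping of the multiplicities $c_m$ at the overlaps $m=l$ and $m=n$, where two regimes meet; the rest is a routine limit followed by a re-indexing.
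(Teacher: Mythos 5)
Your proposal is correct and is essentially the paper's own (implicit) derivation: the paper states the corollary as a direct reformulation of MacMahon's formula, which amounts to exactly your two steps of letting $t\to\infty$ and regrouping the factors of $\prod_{i,j}(1-x^{i+j-1})^{-1}$ by the value of the exponent. Your multiplicity count $c_m=\min(l,m)-\max(1,m+1-n)+1$ and the three regimes match the stated product, so nothing is missing.
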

Note that in this case, the coefficient of $x^n$ corresponds to the plane partition fitting inside a $r\times s$ rectangle with biggest part less than or equal to $n$.
\begin{theorem}\label{ThmPP}
We have the following combinatorial interpretation for our families of reduced Kronecker coefficients.
\begin{enumerate}
\item The reduced Kronecker coefficient of Family 1, $\overline{g}_{(k^a),(k^a)}^{(k)}$, counts the number of plane partitions of $k$ fitting inside a $2\times a$ rectangle. 

\item The reduced Kronecker coefficient of Family 2 after shifting the initial zeros are described as:
\begin{eqnarray*}
\left\{ \overline{g}^{(k+d\cdot i)}_{\left( (k + (d+1)i)^a \right), \left((k+d\cdot i)^a\right)} \right\}_{k,i\geq 0},
\end{eqnarray*}
with $d=\frac{a(a+1)}{2}$. Then, for any $i$ fixed, they count the number of plane partitions of $k$ fitting inside a $2\times a$ rectangle. 

\item We have the following combinatorial interpretation of the reduced Kronecker coefficients $\overline{\overline{g}}_a(j)$, which are the stable values of the diagonals in Family 3:
\begin{eqnarray*}
\overline{\overline{g}}_a(j)  = \sum_{l=0}^j \# \left\{ \begin{array}{c}  \text{plane partitions of } l \\ \text{in } 3\times (a-1) \text{ rectangle}\end{array} \right\} \# \left\{ \begin{array}{c}  \text{plane partitions of }j- l \\ \text{in } 2\times 1  \text{ rectangle}\end{array} \right\} .
\end{eqnarray*}
\end{enumerate}
\end{theorem}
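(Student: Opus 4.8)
The plan is to reduce each of the three statements to an identity of generating functions and then read off the combinatorial count by extracting a coefficient. For part~(1), Theorem~\ref{ThmGF} tells us that the generating function of $\overline{g}_{(k^a),(k^a)}^{(k)}$ is $\mathcal{F}_a$, so it suffices to verify that $\mathcal{F}_a$ is exactly the generating function for plane partitions inside a $2\times a$ rectangle. I would apply Corollary~\ref{ThmPPfg} with $r=2$ and $s=a$, so that $l=\min(r,s)=2$ and $n=\max(r,s)=a$: the first factor contributes $\prod_{j=2}^a(1-x^j)^{-2}$, and the $i=1$ term of the second factor contributes $(1-x)^{-1}(1-x^{a+1})^{-1}$, which together reproduce $\mathcal{F}_a$ term by term. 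Comparing the coefficient of $x^k$ on both sides then yields the claimed count.

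For part~(2) no new work is needed. By Theorem~\ref{ThmGF}(2) the shifted Family~2 coefficients $\overline{g}^{(k+d\cdot i)}_{((k+(d+1)i)^a),((k+d\cdot i)^a)}$ have, for each fixed $i$, the very same generating function $\mathcal{F}_a$ as Family~1. Hence the plane-partition interpretation of part~(1) transfers verbatim, the shift by $d\cdot i$ being exactly what accounts for the initial zeros that were removed.

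Part~(3) is the one that requires an actual idea. The right-hand side is a convolution $\sum_{l=0}^{j} a_l\, b_{j-l}$, where $a_l$ counts plane partitions of $l$ in a $3\times(a-1)$ box and $b_{j-l}$ counts plane partitions of $j-l$ in a $2\times 1$ box; such a convolution is precisely the coefficient of $x^j$ in the \emph{product} of the two associated generating functions. So I would compute each factor from Corollary~\ref{ThmPPfg}: the $2\times 1$ box gives $\frac{1}{(1-x)(1-x^2)}$, while the $3\times(a-1)$ box gives a product whose shape is dictated by $l=\min(3,a-1)$. Multiplying the two, I expect to recover exactly the function $\mathcal{G}_a$ from Theorem~\ref{ThmFam3}, which is the generating function of $\overline{\overline{g}}_a(j)$; matching coefficients of $x^j$ then proves the stated identity.

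The main obstacle, and essentially the only non-clerical point, is the case distinction in part~(3): the formula of Corollary~\ref{ThmPPfg} for the $3\times(a-1)$ box takes a different form according to whether $a\geq 4$ (so $l=3$), $a=3$ (so $l=2$), or $a=2$ (so $l=1$), and one must check in each regime that the product with $\frac{1}{(1-x)(1-x^2)}$ collapses to the single closed form $\mathcal{G}_a$. The degenerate small-$a$ cases should be verified separately, to ensure the empty products appearing in $\mathcal{F}_a$ and $\mathcal{G}_a$ are read off consistently. Once these generating-function identities are established, all three combinatorial interpretations follow immediately by extracting the coefficient of the appropriate power of $x$.
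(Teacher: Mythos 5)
Your proposal is correct and follows essentially the same route as the paper: all three parts are reduced to generating-function identities via Theorem \ref{ThmGF}, Theorem \ref{ThmFam3} and Corollary \ref{ThmPPfg}, and for part (3) the key fact is exactly the paper's relation $\mathcal{H}_a=(1-x)(1-x^2)\mathcal{G}_a$ between the $3\times(a-1)$-box generating function and $\mathcal{G}_a$. Your direct convolution reading of $\mathcal{G}_a=\mathcal{H}_a\cdot\frac{1}{(1-x)(1-x^2)}$ is a slightly cleaner packaging of what the paper does through an explicit recursion and its Lemma \ref{Lemma2} (whose coefficients $\lfloor (n-m)/2\rfloor+1$ are then identified with the $2\times 1$-box counts), but the underlying argument is the same.
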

\begin{proof}
The descriptions corresponding to Families 1 and 2 are proved comparing directly the generating function of both combinatorial objects. 
For Family 3, consider the generating function of the plane partitions fitting inside a $3\times (a-1)$ rectangle. By Corollary \ref{ThmPPfg}, it can be written as
 \begin{eqnarray*}
 \mathcal{H}_a = \frac{1}{(1-x)(1-x^2)^2 (1-x^3)^3 \dots (1-x^{a-1})^3(1-x^a)^2(1-x^{a+1})}.
 \end{eqnarray*}
 By Theorem \ref{ThmFam3}, the generating function of the reduced Kronecker coefficients $\overline{\overline{g}}_a(j)$ is $\mathcal{G}_a$, which is related with $\mathcal{H}_a$ by
 \begin{eqnarray}\label{HGEq}
  \mathcal{H}_a = (1-x)(1-x^2) \mathcal{G}_a.
 \end{eqnarray}
 Then, we express the coefficients appearing in the expansion of $\mathcal{G}_a$ in terms of the coefficients of $\mathcal{H}_a$. Let $\mathcal{G}_a = \sum_n q_n x^n$ be the expansion for $\mathcal{G}_a$ and $\mathcal{H}_a = \sum_n r_n x^n$ the corresponding for $\mathcal{H}_a$. Then, $r_n$ is the number of plane partitions fitting inside a $3\times (a-1)$ rectangle. 
 
 Expanding both sides of $\eqref{HGEq}$ and equating the coefficients, we get the following recursive relation
 \begin{eqnarray*}
\begin{array}{l}
r_0 = q_0, \\
r_1 = q_1- q_0, \\
r_2 = q_2 - q_1 -q_0, \\
r_n = q_n - q_{n-1} - q_{n-2} + q_{n-3} \hspace{1cm} \text{ for all } n\geq 3.
\end{array}
\end{eqnarray*}
The coefficients $q_n$ can be expressed in terms of the $r_n$ coefficients.
\begin{lemma}\label{Lemma2}
 With the same notation as above, 
 \begin{eqnarray*}
q_n = \sum_{m=0}^n \left( \left\lfloor \frac{n-m}{2}\right\rfloor + 1 \right) r_m.
 \end{eqnarray*}
\end{lemma}
Finally, observe that the coefficients that appears in Lemma ($\ref{Lemma2}$) count the number of plane partitions fitting inside a $2\times 1$ rectangle. 
 \end{proof}
 
  \subsection{Saturation Hypothesis}
 \mbox{}\vspace{0.25cm}\\
Let denote by $\{ C(\alpha^1,\dots, \alpha^n) \}$ any family of coefficients depending on the partitions $\alpha^1,\dots, \alpha^n$. The family  $\{ C(\alpha^1,\dots, \alpha^n) \}$ satisfies the \emph{saturation hypothesis} if the conditions 
$ C(\alpha^1,\dots, \alpha^n) > 0 $ and $ C(s\cdot \alpha^1, \dots, s\cdot \alpha^n) > 0 $ for all $s>1$ are equivalent, where $s\cdot \alpha = (s\cdot \alpha_1, s\cdot \alpha_2,\dots)$.  

The  Littlewood--Richardson coefficients  satisfy the saturation hypothesis, as was shown by Knutson and Tao in \cite{MR1671451}. On the other hand, the Kronecker coefficients are known not to satisfy it. For instance, consider the following family of Kronecker coefficients given in \cite{MR2570451}: $\left\{g_{(n,n),(n,n)}^{(n,n)}\right\}_{n\geq 0}$. Then, the coefficient is 1 when $n$ is even, and 0 otherwise. This family shows that Kronecker coefficients do not satisfy the saturation hypothesis in general. 

In \cite{Klya04} and \cite{MR2105706}, Kirillov and Klyachko  conjecture that the  reduced Kronecker coefficients satisfy the saturation hypothesis.
 From the combinatorial interpretation of our familiyes of reduced Kronecker coefficients in terms of plane partitions given in Theorem \ref{ThmPP}, we verify that for these coefficients the saturation hypothesis holds.
 \begin{proposition}
The saturation hypothesis holds for the coefficients of Family 1, $\overline{g}_{(k^a), (k^a)}^{(k)}$. In fact, $\overline{g}_{((sk)^a), ((sk)^a)}^{(sk)} >0$ for all $s\geq 1$. Moreover, the sequences of coefficients obtained by, either fixing $k$ or $a$, and letting the other parameter grow are weakly increasing. 
\end{proposition}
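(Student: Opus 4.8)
The plan is to work entirely on the plane partition side, using the combinatorial interpretation of Theorem \ref{ThmPP}(1): the coefficient $\overline{g}_{(k^a),(k^a)}^{(k)}$ equals the number of plane partitions of $k$ fitting inside a $2\times a$ rectangle. Writing $\mathrm{PP}(k;2\times a)$ for this set, every assertion of the proposition reduces to elementary manipulations of plane partitions, so no representation theory is needed beyond invoking Theorem \ref{ThmPP}.

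First I would establish positivity. For $a\geq 1$ and any $k\geq 0$, the single column of height $k$ placed in one corner (all other cells zero) is a plane partition of $k$ whose base is $1\times 1\subseteq 2\times a$; hence $|\mathrm{PP}(k;2\times a)|\geq 1$ and $\overline{g}_{(k^a),(k^a)}^{(k)}>0$. Applying this with $k$ replaced by $sk$ immediately gives $\overline{g}_{((sk)^a),((sk)^a)}^{(sk)}>0$ for every $s\geq 1$, which is the explicit assertion of the proposition. The saturation hypothesis is then automatic: for $a\geq 1$ both $\overline{g}_{(k^a),(k^a)}^{(k)}>0$ and $\overline{g}_{((sk)^a),((sk)^a)}^{(sk)}>0$ hold unconditionally, so the two positivity conditions are trivially equivalent.

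For the monotonicity statements I would argue by set inclusion and by an explicit injection. Fixing $k$ and increasing $a$ is immediate: a plane partition fitting in a $2\times a$ box, padded with zeros, also fits in a $2\times(a+1)$ box, so $\mathrm{PP}(k;2\times a)\subseteq\mathrm{PP}(k;2\times(a+1))$ and the counts are weakly increasing. Fixing $a$ and increasing $k$ requires an injection $\mathrm{PP}(k;2\times a)\hookrightarrow\mathrm{PP}(k+1;2\times a)$; the natural choice adds $1$ to the corner entry $\pi_{11}$, which is the maximal entry of $\pi$. Since $\pi_{11}\geq\pi_{12}$ and $\pi_{11}\geq\pi_{21}$, raising $\pi_{11}$ by $1$ preserves the weakly decreasing conditions along the first row and first column while leaving every other inequality untouched, so the image is again a plane partition, now of $k+1$; the map is injective because one recovers $\pi$ by subtracting $1$ from the corner. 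This yields $\overline{g}_{(k^a),(k^a)}^{(k)}\leq\overline{g}_{((k+1)^a),((k+1)^a)}^{(k+1)}$.

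There is essentially no serious obstacle here; the only points demanding care are checking that the corner-addition map genuinely lands among plane partitions (that the two corner inequalities are the only ones affected and remain valid) and that it is merely injective, not bijective -- the latter is precisely what produces a weak rather than an equality bound, since plane partitions of $k+1$ whose corner fails $\pi_{11}\geq\pi_{12}+1$ or $\pi_{11}\geq\pi_{21}+1$ lie outside the image. One should also note the degenerate case $a=0$, where the box is empty and $\overline{g}_{(k^0),(k^0)}^{(k)}=\delta_{k,0}$; restricting to $a\geq 1$, as in the statement, avoids this and is consistent with the monotonicity in $a$.
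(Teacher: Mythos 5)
Your proof is correct and follows essentially the same route as the paper: both invoke the plane-partition interpretation, witness positivity with a single stack of size $sk$ in one cell of the $2\times a$ box, and prove monotonicity in $k$ via the injection that adds $1$ to the corner entry $\pi_{11}$. The only difference is that you also spell out the monotonicity in $a$ (by padding with zeros), a case the paper's proof leaves implicit.
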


\begin{proof}
By Theorem \ref{ThmGF}, $\overline{g}_{((sk)^a), ((sk)^a)}^{(sk)} $ counts the number of plane partitions of $sk$ fitting in a $2\times a$ rectangle. 
Consider the plane partition with only one part, \fbox{sk}. There always exists this plane partition, whether it is $s\geq 1$.   
Then,  $\overline{g}_{((sk)^a), ((sk)^a)}^{(sk)} >0$ for any $s\geq 1$.

To prove that the coefficients are weakly increasing, we  show that any plane partition of $k$ fitting in a $2\times a$ rectangle can be identify with a plane partition of $k+1$ fitting in the same rectangle. 
 Consider a plane partition of $k$ fitting in a $2\times a$ rectangle. We identify this plane partition with the plane partition of $k+1$ fitting in the same rectangle. The identification is as follows:
 \begin{center}
 \scalebox{0.7}{
  \begin{tikzpicture}
   \draw (0,0) rectangle (1,0.5);
   \draw (1,0) rectangle (2,0.5);
   \draw (2,0) rectangle (3.5,0.5);
   \draw (3.5,0) rectangle (4.5,0.5);
   
   \draw (0,0.5) rectangle (1,1);
   \draw (1,0.5) rectangle (2,1);
   \draw (2,0.5) rectangle (3.5,1);
   \draw (3.5,0.5) rectangle (4.5,1);
   
   \draw[|-|] (0,1.25) --(4.5,1.25);
   
   \node at (0.5,0.25) {$\alpha_{21}$};
   \node at (1.5,0.25) {$\alpha_{22}$};
   \node at (2.75,0.25) {$\cdots$};
   \node at (4,0.25) {$\alpha_{2,a}$};

   \node at (0.5,0.75) {$\alpha_{11}$};
   \node at (1.5,0.75) {$\alpha_{12}$};
   \node at (2.75,0.75) {$\cdots$};
   \node at (4,0.75) {$\alpha_{1,a}$};
   \node at (2.25,1.5) {$a$};
   
   \draw[|->] (5,0.5) -- (6.5,0.5);
   
   \draw (7,0) rectangle (8.5,0.5);
   \draw (8.5,0) rectangle (9.5,0.5);
   \draw (9.5,0) rectangle (11,0.5);
   \draw (11,0) rectangle (12,0.5);
   
   \draw (7,0.5) rectangle (8.5,1);
   \draw (8.5,0.5) rectangle (9.5,1);
   \draw (9.5,0.5) rectangle (11,1);
   \draw (11,0.5) rectangle (12,1);
   
   \draw[|-|] (7,1.25) --(12,1.25);
   
   \node at (7.75,0.25) {$\alpha_{21}$};
   \node at (9,0.25) {$\alpha_{22}$};
   \node at (10.25,0.25) {$\cdots$};
   \node at (11.5,0.25) {$\alpha_{2,a}$};

   \node at (7.75,0.75) {$\alpha_{11}+1$};
   \node at (9,0.75) {$\alpha_{12}$};
   \node at (10.25,0.75) {$\cdots$};
   \node at (11.5,0.75) {$\alpha_{1,a}$};
   \node at (9.5,1.5) {$a$};
  \end{tikzpicture}}
 \end{center}

Then, the set of plane partitions of $k+1$ fitting in a $2\times a$ rectangle has at least as many elements as the set of plane partitions of $k$ fitting in the same rectangle. 
\end{proof}

For Family 2, we have to consider the reduced Kronecker coefficients after the initial zeros. Recall that $d=\frac{a(a+1)}{2}$.
 \begin{corollary}
The saturation hypothesis holds for the coefficients 
\begin{eqnarray*}
\overline{g}_{((k+(d+1)i)^a), ((k+d\cdot i)^a)}^{(k+d\cdot i)}.
\end{eqnarray*}
Moreover, the sequences of coefficients obtained by fixing $a$ and letting $k$ grow are weakly increasing. 
\end{corollary}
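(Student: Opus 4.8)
The plan is to reduce the entire statement to the Family~1 Proposition already proved above, using the plane-partition dictionary of Theorem~\ref{ThmPP}. By part~(2) of that theorem, after the initial zeros are shifted away, for each fixed $i$ the coefficient $\overline{g}_{((k+(d+1)i)^a),((k+d\cdot i)^a)}^{(k+d\cdot i)}$ equals the number of plane partitions of $k$ fitting inside a $2\times a$ rectangle. This is precisely the quantity counted by $\overline{g}_{(k^a),(k^a)}^{(k)}$ in Family~1, so for fixed $a$ and $i$ the two sequences in $k$ coincide, and every structural feature established for Family~1 transfers verbatim.

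First I would record how the saturation scaling acts on the parameters. The three indexing partitions of $\overline{g}_{((k+(d+1)i)^a),((k+d\cdot i)^a)}^{(k+d\cdot i)}$ are $\alpha=((k+(d+1)i)^a)$, $\beta=((k+d\cdot i)^a)$ and $\gamma=(k+d\cdot i)$. Multiplying each part by $s$ produces a coefficient of exactly the same shape, whose new parameters $k',i'$ are forced by $k'+d\cdot i'=s(k+d\cdot i)$ and $k'+(d+1)i'=s(k+(d+1)i)$; subtracting these gives $i'=s\,i$ and hence $k'=s\,k$. Thus the dilation by $s$ sends $(k,i)$ to $(sk,si)$, stays inside the shifted Family~2, and by Theorem~\ref{ThmPP}(2) the scaled coefficient counts the plane partitions of $sk$ inside a $2\times a$ rectangle.

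The saturation hypothesis then holds for a trivial reason, just as for Family~1: for every nonnegative integer $m$ there is at least one plane partition of $m$ inside a $2\times a$ rectangle (the empty partition when $m=0$, and the single box \fbox{$m$} otherwise). Hence both $\overline{g}_{((k+(d+1)i)^a),((k+d\cdot i)^a)}^{(k+d\cdot i)}$ and its $s$-scaling are strictly positive, so the two positivity conditions appearing in the definition of the saturation hypothesis are equivalent (indeed both are always satisfied).

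For the monotonicity claim I would reuse the injection from the Family~1 proof directly: fixing $a$ and $i$, the coefficient in position $k$ counts plane partitions of $k$ inside a $2\times a$ rectangle, and adding $1$ to the top-left entry $\alpha_{11}$ embeds these injectively into the plane partitions of $k+1$ inside the same rectangle, so the count is weakly increasing in $k$. The only step requiring any care — and the one I expect to be the main, though minor, obstacle — is the parameter bookkeeping of the second paragraph, namely verifying that the $s$-dilation genuinely remains within the shifted Family~2 and lands on $(sk,si)$ without violating the partition conditions needed to invoke Theorem~\ref{ThmPP}(2); once this is checked, the corollary is immediate from that theorem together with the Family~1 Proposition.
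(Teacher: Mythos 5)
Your proposal is correct and follows essentially the same route the paper intends: the corollary is stated without a separate proof precisely because, by Theorem \ref{ThmPP}(2), the shifted Family 2 coefficients count plane partitions of $k$ in a $2\times a$ rectangle and therefore inherit both saturation and weak monotonicity from the Family 1 proposition. Your extra check that the $s$-dilation sends $(k,i)$ to $(sk,si)$ and stays within the shifted family is a useful piece of bookkeeping that the paper leaves implicit, but it does not change the argument.
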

Observe that in this case, once we fix $k$, the sequence obtained letting $a$ grows is not weakly increasing because the number of initial zeros depends on $a$. 

Finally, for Family 3, we have the corresponding result concerning the stable values of the diagonals, $\overline{\overline{g}}_a(j)$.
\begin{corollary}
The saturation hypothesis holds for the coefficients $\overline{\overline{g}}_a(j)$. In fact, $\overline{\overline{g}}_a(sj) >0$ for all $s\geq 1$, where $\overline{\overline{g}}_a(sj)$ denotes the associated reduced Kronecker coefficient with its three partitions multiplied by $s$. Moreover, the sequences of coefficients obtained by, either fixing $i$ or $a$, and then letting $k$ grow, are weakly increasing. 
\end{corollary}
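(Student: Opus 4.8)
The plan is to read everything off the plane-partition description in Theorem \ref{ThmPP}(3), exactly as was done for Family 1. Throughout, write $pp_{r,s}(m)$ for the number of plane partitions of $m$ fitting inside an $r\times s$ rectangle, so that Theorem \ref{ThmPP}(3) reads $\overline{\overline{g}}_a(j) = \sum_{l=0}^{j} pp_{3,a-1}(l)\, pp_{2,1}(j-l)$. The whole argument then reduces to elementary facts about this convolution.

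First I would settle the saturation claim. Since $\overline{\overline{g}}_a(j)=\overline{g}^{(k)}_{(k^a),(2k-j,k^{a-1})}$ for $k\ge 2j$, scaling the three indexing partitions by $s$ sends $(k^a)\mapsto ((sk)^a)$, $(2k-j,k^{a-1})\mapsto (2(sk)-sj,(sk)^{a-1})$ and $(k)\mapsto (sk)$; choosing $k$ large enough that $sk\ge 2sj$, the scaled coefficient is genuinely the stable value $\overline{\overline{g}}_a(sj)$. Now take only the $l=0$ term of the convolution: $pp_{3,a-1}(0)=1$ (the empty plane partition) and $pp_{2,1}(sj)\ge 1$ (the single-box plane partition of value $sj$ fits inside a $2\times 1$ rectangle). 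Hence $\overline{\overline{g}}_a(sj)\ge 1>0$ for every $s\ge 1$, and in fact for every $j\ge 0$. Because all these coefficients are strictly positive, the equivalence defining the saturation hypothesis holds trivially, which is precisely the assertion.

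For the monotonicity, the sequence obtained by fixing $i$ and letting $k$ grow runs through increasing values of $j=k-i$ with $a$ fixed, so it suffices to show that $\overline{\overline{g}}_a(j)\le\overline{\overline{g}}_a(j+1)$. I would exhibit an injection from the pairs counted by $\overline{\overline{g}}_a(j)$ into those counted by $\overline{\overline{g}}_a(j+1)$, in the spirit of the Family 1 proof: given a pair $(P,Q)$ with $P$ a plane partition of $l$ in a $3\times(a-1)$ rectangle and $Q$ a plane partition of $j-l$ in a $2\times 1$ rectangle, send it to $(P,Q^{+})$, where $Q^{+}$ adds one to the largest part of $Q$. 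A plane partition in a $2\times 1$ rectangle is just a pair $q_1\ge q_2\ge 0$, so $Q^{+}=(q_1+1,q_2)$ is again valid, now of weight $j-l+1$, and the map is injective since $Q$ is recovered by subtracting one from the top box. Summing over $l$ gives the inequality. For the companion direction in which $a$ grows, the inclusion of a $3\times(a-1)$ rectangle into a $3\times a$ rectangle gives $pp_{3,a-1}(l)\le pp_{3,a}(l)$, so the convolution defining $\overline{\overline{g}}_a(j)$ is dominated term-by-term by that defining $\overline{\overline{g}}_{a+1}(j)$, whence $\overline{\overline{g}}_a(j)\le\overline{\overline{g}}_{a+1}(j)$.

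I expect no serious obstacle here: the positivity falls out of a single term of the convolution, and both monotonicities come from explicit injections that preserve, or raise by one, the total weight. The only point demanding genuine care is the bookkeeping in the saturation step, namely checking that scaling the three partitions really corresponds to $(k,j)\mapsto (sk,sj)$ and that the stabilization range $k\ge 2j$ is respected, so that the scaled coefficient is the stable value $\overline{\overline{g}}_a(sj)$ rather than a pre-stable entry of the table.
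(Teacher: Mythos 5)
Your proposal is correct and follows essentially the same route as the paper: positivity is read off from a single explicit pair in the plane-partition convolution of Theorem \ref{ThmPP}(3), and monotonicity comes from a weight-raising injection on pairs. The only (immaterial) difference is that you increment the top box of the $2\times 1$ component while the paper increments the corner entry $\alpha_{11}$ of the $3\times(a-1)$ component; your explicit bookkeeping that scaling the three partitions corresponds to $(k,j)\mapsto(sk,sj)$ within the stable range $k\ge 2j$ is a welcome addition that the paper leaves implicit.
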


\begin{proof}
By Theorem \ref{ThmFam3}, $\overline{\overline{g}}_a(j)$ has the following combinatorial description in terms of plane partitions:
\begin{eqnarray*}
\overline{\overline{g}}_a(j) = \sum_{l=0}^j \# \left\{ \begin{array}{c}  \text{plane partitions of } l \\ \text{in } 3\times (a-1) \text{ rectangle} \end{array} \right\} \# \left\{ \begin{array}{c}  \text{plane partitions of }j- l \\ \text{in } 2\times 1 \text{ rectangle} \end{array} \right\} .
\end{eqnarray*}
There always exists a pair of plane partitions with a plane partition of $l\in \{0,\dots, sj\}$, fitting in a $3\times (a-1)$ rectangle, and a plane partition of $sj-l$ fitting in a $2\times 1$ rectangle: for $s\geq 1$, we consider the following pair 
\scalebox{0.7}{
\begin{tikzpicture}
 \draw (0,0) rectangle (1,0.5);
 \draw (1.25,0) rectangle (3.25, 0.5);
 
 \node at (0.5,0.25) {$sl$};
 \node at (2.25, 0.25) {$s(j-l)$};
\end{tikzpicture}},
with $l\in \{0,\dots, j\}$.

Indeed, any pair of plane partitions associated to $j$ defines a pair of plane partitions associated to $j+1$. 
Let us consider a pair of plane partitions, where the first partition is any plane partition of $l \in \{0,\dots, j\}$ fitting in a $3\times (a-1)$ rectangle and the second is any plane partition of $j-l$ fitting in a $2\times 1$ 
  \begin{center}
  \scalebox{0.8}{
  \begin{tikzpicture}
   \draw (0,0) rectangle (1,0.5);
   \draw (1,0) rectangle (2,0.5);
   \draw (2,0) rectangle (4,0.5);
   \draw (4,0) rectangle (5.5,0.5);
   
   \draw (0,0.5) rectangle (1,1);
   \draw (1,0.5) rectangle (2,1);
   \draw (2,0.5) rectangle (4,1);
   \draw (4,0.5) rectangle (5.5,1);
   
   \draw (0,1) rectangle (1,1.5);
   \draw (1,1) rectangle (2,1.5);
   \draw (2,1) rectangle (4,1.5);
   \draw (4,1) rectangle (5.5,1.5);
   
   \draw (6,0) rectangle (7,0.75);
   \draw (6,0.75) rectangle (7,1.5);

   \draw[|-|] (0,1.75) --(5.5,1.75);
   \draw[|-|] (6,1.75) --(7,1.75);
   
   \node at (0.5,0.25) {$\alpha_{31}$};
   \node at (1.5,0.25) {$\alpha_{32}$};
   \node at (3.1,0.25) {$\cdots$};
   \node at (4.75,0.25) {$\alpha_{3,a-1}$};

   \node at (0.5,0.75) {$\alpha_{21}$};
   \node at (1.5,0.75) {$\alpha_{22}$};
   \node at (3.1,0.75) {$\cdots$};
   \node at (4.75,0.75) {$\alpha_{2,a-1}$};
   
   \node at (0.5,1.25) {$\alpha_{11}$};
   \node at (1.5,1.25) {$\alpha_{12}$};
   \node at (3.1,1.25) {$\cdots$};
   \node at (4.75,1.25) {$\alpha_{1,a-1}$};
   
   \node at (6.5,0.375) {$\beta_2$};
   \node at (6.5,1.125) {$\beta_1$};
   
   \node at (3.1,2) {$a-1$};  
   \node at (6.5,2) {$1$}; 
  \end{tikzpicture}}
 \end{center}
 If we add one to the first element of the first plane partition, $\alpha_{11}+1$, we obtain a plane partition of $l^\prime =l+1 \in [0,\dots, j+1]$ fitting in a $3\times (a-1)$ rectangle. 
 The other plane partition can be seen as a plane partition of $j+1-l^\prime=j-l$ that fits in a $2\times 1$ rectangle. 
\end{proof}

\section{Quasipolynomiality} 

In Theorem \ref{ThmGF} and Theorem \ref{ThmFam3} we compute the generating functions $\mathcal{F}_a$ and $\mathcal{G}_a$ for the reduced Kronecker coefficients associated to Families 1 and 3. These are the resulting implications of this calculation.
  \begin{theorem}\label{ThmQuasiPoly}
  Let $\mathcal{F}_a$ be the generating function for the reduced Kronecker coefficients $\overline{g}_{(k^a),(k^a)}^{(k)}$.
  Let $\mathcal{G}_{a}$ be the generating function for  the reduced Kronecker coefficients $\overline{\overline{g}}_a(j)$.

Let $\ell$ be the least common multiple of $1, 2, \ldots, a, a+1$.
\begin{enumerate}
\item The generating function $\mathcal{F}_{a}$ can be rewritten as $\mathcal{F}_{a}= \displaystyle{\frac{P_a(x)}{(1-x^{\ell})^{2a}}}$, where $P_a(x)$ is a product of cyclotomic polynomials. 

Moreover, $\deg(P_a(x)) =2a\ell - (a+2)a< 2a\ell-1$. 

\item The generating function $\mathcal{G}_{a}$ can be rewritten as $\displaystyle{\mathcal{G}_{a}= \frac{Q_a(x)}{(1-x^{\ell})^{3a-1}}}$, where $Q_a(x)$ is a product of cyclotomic polynomials.

 Moreover $\deg(Q_a(x)) =\ell (3a-1) - \frac{3}{2}(a^2+a)<\ell(3a-1)-1$. 

\item  The  coefficients $\overline{g}^{(k)}_{(k^a), (k^a)}$ are described by a quasipolynomial of degree $2a-1$ and period dividing $\ell$. In fact, we have checked that the period is exactly $l$ for $a$ less than 10.

\item  The  coefficients $\overline{\overline{g}}_a(j)$ are described by a quasipolynomial of degree $3a-2$ and period dividing $\ell$. In fact, we have checked that the period is exactly $l$ for $a$ less than 7.
\end{enumerate}
 \end{theorem}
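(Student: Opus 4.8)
The plan is to treat $\mathcal{F}_a$ and $\mathcal{G}_a$ uniformly, exploiting the single observation that every exponent $d$ occurring in their denominators lies in $\{1,2,\dots,a+1\}$ and hence divides $\ell$. For such a $d$ one has
\begin{equation*}
\frac{1}{1-x^d}=\frac{1}{1-x^{\ell}}\cdot\frac{1-x^{\ell}}{1-x^d},
\qquad
\frac{1-x^{\ell}}{1-x^d}=\frac{x^{\ell}-1}{x^{d}-1}=\prod_{\substack{e\mid \ell\\ e\nmid d}}\Phi_e(x),
\end{equation*}
a product of cyclotomic polynomials of degree $\ell-d$. Applying this to each factor in the denominators of Theorem \ref{ThmGF}(1) and Theorem \ref{ThmFam3}(1) immediately produces the shapes $\mathcal{F}_a=P_a(x)/(1-x^{\ell})^{2a}$ and $\mathcal{G}_a=Q_a(x)/(1-x^{\ell})^{3a-1}$ claimed in parts (1) and (2), with $P_a$ and $Q_a$ products of cyclotomic polynomials.

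The exponent of $(1-x^{\ell})$ is just the number of denominator factors counted with multiplicity, namely $1+2(a-1)+1=2a$ for $\mathcal{F}_a$ and $2+3(a-2)+2+1=3a-1$ for $\mathcal{G}_a$. Likewise $\deg P_a=\sum(\ell-d)=2a\ell-\sum d$, and I would compute $\sum d = 1+2\sum_{j=2}^{a}j+(a+1)=(a+2)a$, giving $\deg P_a=2a\ell-(a+2)a$; the analogous sum for $\mathcal{G}_a$ is $\tfrac32(a^2+a)$, giving $\deg Q_a=(3a-1)\ell-\tfrac32(a^2+a)$. The strict bounds $\deg P_a<2a\ell-1$ and $\deg Q_a<(3a-1)\ell-1$ then reduce to the trivial inequalities $(a+2)a>1$ and $\tfrac32(a^2+a)>1$.

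For the quasipolynomiality in parts (3) and (4) I would work from the forms just obtained. Expanding $(1-x^{\ell})^{-m}=\sum_{j\ge0}\binom{j+m-1}{m-1}x^{\ell j}$ (with $m=2a$, resp.\ $m=3a-1$) and convolving against $P_a$ (resp.\ $Q_a$) shows that, once $n$ is fixed modulo $\ell$, the coefficient $[x^n]\mathcal{F}_a$ is a finite $\mathbb{Z}$-linear combination of binomials $\binom{(n-r)/\ell+m-1}{m-1}$ with $r\equiv n\pmod{\ell}$, hence a polynomial in $n$ of degree at most $m-1=2a-1$; since $\deg P_a<2a\ell$ the rational function is proper, so this holds for all $n\ge0$, and reading off residue classes gives period dividing $\ell$. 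To pin the exact degree I would revert to the product form $\mathcal{F}_a=\prod_i(1-x^{d_i})^{-1}$: the order of the pole at a root of unity $\zeta$ of order $e$ equals $\#\{i:e\mid d_i\}$, which is $2a$ for $e=1$ and, for each $e\ge2$, at most $2\lfloor a/e\rfloor+1\le a+1$, hence $<2a$ for $a\ge2$ (the case $a=1$ being immediate). Thus the $x=1$ pole strictly dominates, its contribution $\tfrac{1}{\prod_i d_i}\binom{n+2a-1}{2a-1}$ is a genuine polynomial of degree $2a-1$ with the same nonzero leading coefficient $\tfrac{1}{(2a-1)!\prod_i d_i}$ in every residue class, and all other poles contribute strictly lower degree; so the quasipolynomial for $\overline{g}_{(k^a),(k^a)}^{(k)}$ has degree exactly $2a-1$. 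The identical reasoning for $\mathcal{G}_a$, where the $x=1$ pole has order $3a-1$ while every other pole order is at most $3\lfloor(a-1)/2\rfloor+3<3a-1$, yields degree exactly $3a-2$.

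The only step that is not pure bookkeeping is this last one: verifying that the pole at $x=1$ strictly dominates every other pole at a root of unity. This reduces to the elementary estimate that, for each $e\ge2$, the number of denominator exponents divisible by $e$ is smaller than the total number of factors; the estimate is immediate for $e\ge3$ and needs only a direct check at $e=2$ together with the smallest values of $a$. Once strict dominance is in hand, the degree, the period dividing $\ell$, and the exactness of both quasipolynomial descriptions all follow from the standard partial-fraction theory of rational generating functions. The sharper claim that the period is \emph{exactly} $\ell$ for small $a$ is, as the statement says, merely a computational verification and requires no separate argument.
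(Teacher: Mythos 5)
Your proposal is correct and takes essentially the same route as the paper: both rewrite $\mathcal{F}_a$ and $\mathcal{G}_a$ over $(1-x^{\ell})^{2a}$ and $(1-x^{\ell})^{3a-1}$ via the cyclotomic factorization of $1-x^n$, and carry out the same exponent bookkeeping $\sum(\ell-d_i)$ to get the degrees of $P_a$ and $Q_a$. The only divergence is that the paper derives parts (3) and (4) by citing Proposition 4.13 of Beck--Sanyal, whereas you prove the needed facts directly, including the pole-order comparison at $x=1$ versus the other roots of unity that justifies the \emph{exact} degrees $2a-1$ and $3a-2$; that estimate of yours is correct.
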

 \begin{remark}
 Family 2 is included in the results concerning to $\mathcal{F}_a$, which is its generating functions after shifting the initial zeros.
 \end{remark}
 
 \begin{proof}
  \begin{enumerate}
 \item We define $P_a(x)$ as $P_a(x) = \mathcal{F}_a \cdot (1-x^l)^{2a}$.
 Then, the generating function $\mathcal{F}_a$ can be written as in the theorem.  
 Let  $\Phi_i$ be the $i^{th}$ cyclotomic polynomial. From the well--known identity $(x^n-1)= \prod_{i | n} \Phi_i$, we express $\mathcal{F}_a$ and $(1-x^l)^{2a}$ as product of cyclotomic polynomials. The cyclotomic polynomials appearing in $\mathcal{F}_a$ also appear in $(1-x^l)^{2a}$, with exponent at least equal to their exponent in $\mathcal{F}_a$. Then, $P_a$ is a polynomial and it can be written as a product of cyclotomic polynomials. 
 Moreover, $\deg (\mathcal{F}_a)= a(a+2)$, and then, $\deg (P_a)=2al - a(a+2)$. 
 
 \item We define $Q_a(x)$ as $Q_a(x) = \mathcal{G}_a \cdot (1-x^l)^{3a-1}$.
 Then, the generating function $\mathcal{G}_a$ can be written as in the theorem.
We express $\mathcal{G}_a$ and $(1-x^l)^{3a-1}$ as a product of cyclotomic polynomials, observing that the cyclotomic polynomials appearing in $\mathcal{G}_a$ also appear in $(1-x^l)^{3a-1}$, with at least equal exponent. Then, $Q_a$ is a polynomial and it can be written as a product of cyclotomic polynomials. 
 Moreover, $\deg (\mathcal{G}_a)= \frac{3}{2}a(a+1)$, and $\deg (Q_a)=l(a-1) - \frac{3}{2}a(a+1)$. 
 \end{enumerate}
The other two items follow using Proposition 4.13 of \cite{BecSan}.  
 \end{proof}
 
Let see some examples.
\begin{example}
The coefficients $\overline{g}^{(k)}_{(k^2), (k^2)}$ are given by the following quasipolynomial of degree $3$ and period $6$. 
\begin{align*}
\overline{g}^{(k)}_{(k^2), (k^2)} =
 \left\{
{\small{
 \begin{array}{ll}
 1/72 k^3 +1/6 k^2+\phantom{1}2/3  k +\phantom{5}1 \phantom{1}                            & \text{ if } k \equiv 0 \mod 6\\
 1/72 k^3 +1/6 k^2+13/24 k +5/18             & \text{ if } k \equiv 1 \mod 6\\
 1/72 k^3 +1/6 k^2+\phantom{1}2/3  k +8/9\phantom{1}              & \text{ if } k \equiv 2 \mod 6\\
 1/72 k^3 +1/6 k^2+ 13/24  k +1/2\phantom{1}                       &\text{ if } k \equiv 3 \mod 6\\
 1/72 k^3 +1/6 k^2+\phantom{1}2/3  k +7/9\phantom{1}                             &\text{ if } k \equiv 4 \mod 6\\
 1/72 k^3 +1/6 k^2+13/24  k +7/18                        &\text{ if } k \equiv 5 \mod 6
 \end{array}
  }}
 \right.
 \end{align*}
These quasipolynomials are computed applying the binomial identity to expand $(1-x^6)^4$, and then grouping the monomials in  $P_{2}=\Phi_2^2\Phi_3^3\Phi_6^4$ according to their degree mod $ 6$. For this, we write each number  as $n=2k+r$, with $r \in \{0,\ldots, 5\}$, and write the result in terms of the variable $k$.
 \end{example} 

 \smallskip
 
 \begin{example}
The coefficients $\overline{\overline{g}}_2(j)$ are given by the following quasipolynomial of degree $4$ and period $6$. 
\begin{align*}
\overline{\overline{g}}_2(j) =
 \left\{
{ \small{
 \begin{array}{ll}
 1/288 j^4 +1/16 j^3+\phantom{1}7/18  j^2 +\phantom{5} j + \phantom{1}1              & \text{ if } j \equiv 0 \mod 6\\
1/288 j^4 +1/16 j^3+\phantom{1}7/18  j^2 +15/16 j +175/288           & \text{ if } j \equiv 1 \mod 6\\
1/288 j^4 +1/16 j^3+\phantom{1}7/18  j^2 +\phantom{5}  j +8/9\phantom{1}              & \text{ if } j \equiv 2 \mod 6\\
1/288 j^4 +1/16 j^3+\phantom{1}7/18  j^2 + 15/16 j +23/32\phantom{1}                       &\text{ if } j \equiv 3 \mod 6\\
1/288 j^4 +1/16 j^3+\phantom{1}7/18  j^2 +\phantom{5} j +8/9\phantom{1}                             &\text{ if } j \equiv 4 \mod 6\\
1/288 j^4 +1/16 j^3+\phantom{1}7/18  j^2 +15/16  j +175/288                        &\text{ if } j \equiv 5 \mod 6
 \end{array}}}
 \right.
 \end{align*}
 \end{example}  

\begin{remark}
We denote by $m$ the power of $(1-x^l)$ in each case. It corresponds to the value of $2a$ for Family 1 and to the value of $3a-1$ for Family 3.
We compute the quasipolynomials following the proof of Proposition 4.13 in \cite{BecSan}. The algorithm is the following: apply the binomial identity to expand $(1-x^l)^m$, and then group the monomials in $P_a$, expressed as product of cyclotomic polynomials, according to their degree $\mod l$. For this, write each number as $n=a\cdot k+r$ or $n=a\cdot j+r$, depending on the variable of the quasipolynomials, with $r \in \{0,\dots, l-1\}$. Finally, write the result in terms of the variable of the quasipolynomials, $k$ for Family 1 or $j$ for Family 3. 

\end{remark}

\section{On the rate of growth of the coefficients} 

Thanks to the classical stability phenomenon for the Kronecker coefficients discovered by Murnaghan, we can transcribe our results in terms of reduced Kronecker coefficients as results about Kronecker coefficients.

Firstly we state them in the most interesting cases.
\begin{corollary}[Family 1, Kronecker coefficients version]\label{CorKC1}
Consider the Kronecker coefficients of the form
\begin{eqnarray*}
g_{(n-a\cdot k, k^a),(n-a\cdot k,k^a)}^{(n-k,k)} .
\end{eqnarray*}
Then, for $n\geq (a+3)\cdot k$, we have that
\begin{enumerate}
\item[(i)] Their generating function is
\begin{eqnarray*}
\mathcal{F}_a = \frac{1}{(1-x)(1-x^2)^2\cdots (1-x^a)^2(1-x^{a+1})}.
\end{eqnarray*}
\item[(ii)] They count the number of plane partitions of $k$ fitting inside a $2\times a$ rectangle. 
\item[(iii)] They can be described by a quasipolynomial of degree $2a-1$ and period dividing $l$ (least common multiple of $1,2, \dots ,a,a+1)$. 
\end{enumerate}
\end{corollary}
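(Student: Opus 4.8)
The plan is to reduce the entire corollary to the results already established for the reduced Kronecker coefficients of Family~1, using Murnaghan's stability phenomenon together with the explicit stability bound of Corollary~\ref{StabilityBound}. First I would observe that the triple indexing these Kronecker coefficients is exactly of the shape $\alpha[n]$, $\beta[n]$, $\gamma[n]$ for the reduced triple $\alpha=\beta=(k^a)$ and $\gamma=(k)$. Indeed, $\alpha[n]=(n-ak,k^a)$, $\beta[n]=(n-ak,k^a)$ and $\gamma[n]=(n-k,k)$, so that
\begin{eqnarray*}
g_{(n-ak,k^a),(n-ak,k^a)}^{(n-k,k)} = g_{\alpha[n]\,\beta[n]}^{\gamma[n]}.
\end{eqnarray*}
For $n\geq(a+1)k$ all three sequences are genuine partitions, and by Theorem~\ref{MurnThm} this sequence stabilizes to $\overline{g}_{(k^a),(k^a)}^{(k)}$.

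The only computational step is to verify that the stabilization has actually occurred once $n\geq(a+3)k$. For this I would apply Corollary~\ref{StabilityBound} with the formula $stab(\mu,\nu)=|\mu|+|\nu|+\mu_1+\nu_1$. With $|\alpha|=|\beta|=ak$, $\alpha_1=\beta_1=k$ and $|\gamma|=\gamma_1=k$, one computes $stab(\alpha,\beta)=2(a+1)k$ while $stab(\alpha,\gamma)=stab(\beta,\gamma)=(a+3)k$. Since $a\geq 1$ gives $(a+3)k\leq 2(a+1)k$, the minimum of the three is $(a+3)k$, and therefore
\begin{eqnarray*}
g_{(n-ak,k^a),(n-ak,k^a)}^{(n-k,k)} = \overline{g}_{(k^a),(k^a)}^{(k)} \qquad\text{for all } n\geq(a+3)k.
\end{eqnarray*}

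With this identification in hand, the three assertions become immediate transcriptions of earlier results evaluated at the stable value. Part~(i) is precisely the generating function $\mathcal{F}_a$ of Theorem~\ref{ThmGF}(1); part~(ii) is the plane-partition interpretation of Theorem~\ref{ThmPP}(1); and part~(iii) is the quasipolynomiality statement of Theorem~\ref{ThmQuasiPoly}(3), including the degree $2a-1$ and the period dividing $\ell$.

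I expect no genuine obstacle here: the substance of the corollary lives entirely in the reduced setting, and the Kronecker-level statement is obtained simply by locating the stabilization threshold. The only point requiring a little care is confirming that $(a+3)k$ is the correct bound coming from the symmetric minimum in Corollary~\ref{StabilityBound}, rather than the larger value $stab(\alpha,\beta)$; this is settled by the elementary inequality $a\geq 1$, and it is also worth recording explicitly that $\alpha[n]$, $\beta[n]$ and $\gamma[n]$ remain partitions throughout the range $n\geq(a+3)k$.
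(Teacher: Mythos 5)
Your proposal is correct and follows essentially the same route as the paper: the paper itself obtains the identification $\overline{g}^{(k)}_{(k^a),(k^a)}=g^{((a+2)k,k)}_{(3k,k^a),(3k,k^a)}$ in the proof of Theorem \ref{ThmGF} by taking $N=(a+3)k$ from Corollary \ref{StabilityBound}, and then reads off the corollary from Theorems \ref{ThmGF}, \ref{ThmPP} and \ref{ThmQuasiPoly} exactly as you do. Your explicit computation of the three $stab$ values and the check that $(a+3)k$ realizes the minimum for $a\geq 1$ is a correct (and slightly more detailed) version of the step the paper leaves implicit.
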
 
For Family 3, we have also its corresponding result.
\begin{corollary}[Family 3, Kronecker coefficients version]\label{CorKC2}
Consider the Kronecker coefficients of the form
\begin{eqnarray*}
g_{(n-a\cdot k, k^a),\left(n-(a+1)\cdot k+j,2k-j,k^{a-1}\right)}^{(n-k,k)} .
\end{eqnarray*}
For $k\geq 2j$ and $n\geq (a+3)k$, we have that
\begin{enumerate}
\item[(i)] The generating function of these coefficients is
\begin{eqnarray*}
\mathcal{G}_a = \frac{1}{(1-x)^2(1-x^2)^3\cdots (1-x^{a-1})^3(1-x^a)^2(1-x^{a+1})}.
\end{eqnarray*}
\item[(ii)] They have the following combinatorial interpretation in terms of plane partitions: they are equal to
\begin{eqnarray*}
\sum_{l=0}^j \# \left\{ \begin{array}{c}  \text{plane partitions of } l \\ \text{in } 3\times (a-1) \text{ rectangle} \end{array} \right\} \# \left\{ \begin{array}{c}  \text{plane partitions of }j- l \\ \text{in } 2\times 1 \text{ rectangle}\end{array} \right\} .
\end{eqnarray*}
 
\item[(iii)] They can be described by a quasipolynomial of degree $3a-2$ and period dividing $l$ (least common multiple of $1,2, \dots ,a,a+1)$. 
\end{enumerate}
\end{corollary}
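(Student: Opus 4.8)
The plan is to recognize the displayed Kronecker coefficients as the members of a Murnaghan stabilization sequence attached to the Family~3 triple, so that the whole statement reduces to the facts about the diagonal values $\overline{\overline{g}}_a(j)$ already established in Theorems \ref{ThmFam3}, \ref{ThmPP} and \ref{ThmQuasiPoly}.

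First I would set $\alpha=(k^a)$, $\beta=(2k-j,k^{a-1})$ and $\gamma=(k)$, and compute the padded partitions. Since $|\alpha|=ak$, $|\beta|=(a+1)k-j$ and $|\gamma|=k$, one obtains
\begin{eqnarray*}
\alpha[n]=(n-ak,k^a),\quad \beta[n]=(n-(a+1)k+j,2k-j,k^{a-1}),\quad \gamma[n]=(n-k,k),
\end{eqnarray*}
which are exactly the three partitions indexing the coefficient in the statement. Hence the displayed Kronecker coefficient is precisely $g_{\alpha[n]\beta[n]}^{\gamma[n]}$, the $n$-th term of the sequence whose stable value defines $\overline{g}_{(k^a),(2k-j,k^{a-1})}^{(k)}=\overline{\overline{g}}_a(j)$.

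Next I would apply Murnaghan's Theorem \ref{MurnThm} together with the bound of Corollary \ref{StabilityBound}, $N\le\min\{stab(\alpha,\beta),stab(\alpha,\gamma),stab(\beta,\gamma)\}$, to locate the stabilization point. A direct computation with $stab(\mu,\nu)=|\mu|+|\nu|+\mu_1+\nu_1$ gives $stab(\alpha,\gamma)=(a+3)k$, while $stab(\alpha,\beta)=(2a+4)k-2j$ and $stab(\beta,\gamma)=(a+5)k-2j$. In the regime $k\ge 2j$ both of the latter exceed $(a+3)k$, so the minimum is $stab(\alpha,\gamma)=(a+3)k$; consequently $g_{\alpha[n]\beta[n]}^{\gamma[n]}=\overline{\overline{g}}_a(j)$ for all $n\ge(a+3)k$. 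The hypothesis $k\ge 2j$ is exactly the condition from Theorem \ref{ThmFam3} ensuring that the $j^{th}$ diagonal has already reached its stable value, so the two constraints in the corollary match up cleanly.

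The three assertions then follow by transporting the known results for $\overline{\overline{g}}_a(j)$: item~(i) is the generating function $\mathcal{G}_a$ from Theorem \ref{ThmFam3}; item~(ii) is the plane-partition convolution of Theorem \ref{ThmPP}(3); and item~(iii) is the quasipolynomial of degree $3a-2$ and period dividing $\ell$ from Theorem \ref{ThmQuasiPoly}(4). There is no genuine obstacle here beyond careful bookkeeping; the only delicate point is verifying that $(a+3)k$ is indeed the minimum of the three $stab$-values under the assumption $k\ge 2j$, since this is what certifies that the stated range of $n$ is correct and that the reduction to $\overline{\overline{g}}_a(j)$ holds exactly.
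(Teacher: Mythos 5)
Your proposal is correct and follows essentially the same route the paper intends: the corollary is obtained by identifying the displayed Kronecker coefficients with the terms $g_{\alpha[n]\beta[n]}^{\gamma[n]}$ of the Murnaghan sequence for the Family~3 triple, checking via Corollary \ref{StabilityBound} that they have stabilized to $\overline{\overline{g}}_a(j)$ once $n\geq(a+3)k$, and then importing Theorems \ref{ThmFam3}, \ref{ThmPP}(3) and \ref{ThmQuasiPoly}(4). Your explicit verification that $\min\{stab(\alpha,\beta),stab(\alpha,\gamma),stab(\beta,\gamma)\}=(a+3)k$ under $k\geq 2j$ is accurate and in fact supplies a detail the paper leaves implicit.
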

We finish this section with some observations about the rate of growth of the Kronecker coefficients. 

Murnaghan observed that the sequences obtained by adding cells to the first parts of the partitions indexing a Kronecker coefficients are eventually constant.
In \cite{BERARM}, they show that fixed three partitions, the Kronecker coefficients indexed by them stabilize when we increase these partitions with $n$ new boxes in their first row and $n$ new boxes in their first column. They also show that the resulting sequence obtained by increasing the sizes of the second rows (keeping the first one very long in comparison) of the partitions indexing the Kronecker coefficients are described by a linear quasipolynomial of period 2.

In \cite{Vergen15}, Vergne and Baldoni show that the \emph{stretching} reduced Kronecker coefficients $\overline{g}_{k\mu,k\nu}^{k\lambda}$, for any partitions $\lambda$, $\mu$ and $\nu$, are given by a quasipolynomial depending on $k$, $\lambda$, $\mu$ and $\nu$. 
For $i>0$, the reduced Kronecker coefficients coefficients $\overline{g}_{k\mu,k\nu}^{k\lambda +(i)}$, for any partitions $\lambda$, $\mu$ and $\nu$, are given by piecewise quasipolynomials depending on $k$, $\lambda$, $\mu$ and $\nu$, \cite{Manivel14, zbMATH01307854}.
In \cite{EACA14}, the family of reduced Kronecker coefficients $\left\{\overline{g}_{k\lambda,k\mu}^{k\nu+(i)}\right\}_{k,i\geq 0}$ is described in terms of linear piecewise quasilinear polynomials of period 2, including explicit formulas when the partitions $\lambda$, $\mu$ and $\nu$ have length at most 1. 

  An interesting question is then to describe what happens when we add cells to arbitrary rows of the partitions indexing a Kronecker (and reduced Kronecker) coefficient. The results presented in this paper show several cases when we know what happen. For example, for $a=1$, the three families of sequences are describes by a linear quasipolynomial of period 2, as is predicted in the work of Briand, Rattan and Rosas, \cite{BERARM}. But when $a=2$, the sequence corresponding to Family 1 is described by a quasipolynomial of degree 3 and the one corresponding to Family 3 is described by a quasipolynomial of degree 4. For $a=3$, the sequences are described by quasipolynomials of degree 5 and degree 7 (respectively), and so on. 
  
  As a final remark, when we increase the parameter $a$ in case of $b=a$ in Family 1 and $b=a+1$ in Family 3, the sequences are weakly increasing. This corresponds to increase the size of the columns in the indexing partitions.
\bibliographystyle{alpha}
\bibliography{Referencias_Articulos}

\end{document}